\newtheorem{theorem}{Theorem}
\theoremstyle{plain}
\newtheorem{conjecture}[theorem]{Conjecture}
\newtheorem{corollary}[theorem]{Corollary}
\newtheorem{lemma}[theorem]{Lemma}
\newtheorem{proposition}[theorem]{Proposition}
\theoremstyle{definition}
\newcommand{\lc}[2]{#1 \leq_{\text{LC}} #2}
\newcommand{\notlc}[2]{#1 \not\leq_{\text{LC}} #2}
\def\lcposet{ \leq_{\text{LC}}}
\def\lcmax{\(\leq_{\text{LC}}\)}
\def\LC{\textnormal{\textbf{LC}}}
\def\poset{\binom{[2k]}{k}^{\lcposet}}
\newcommand{\twopartdefother}[3]
{
	\left\{
		\begin{array}{ll}
			#1 & \mbox{if } #2 \\
			#3 & \mbox{otherwise.}
		\end{array}
	\right.
}
\newcommand{\defeq}{\vcentcolon=}
\renewcommand{\leq}{\leqslant}
\renewcommand{\geq}{\geqslant}
\newcommand{\F}{\mathcal{F}}
\newcommand{\G}{\mathcal{G}}
\newcommand{\B}{\mathcal{B}}
\newcommand{\X}{\mathcal{X}}
\newcommand{\W}{\mathcal{W}}
\newcommand{\Pk}{\mathcal{P}_k}
\newcommand{\M}[1]{{\mathcal{M}}_{#1}}
\DeclarePairedDelimiter\floor{\lfloor}{\rfloor}
\newcommand{\total}[1]{\totalsum #1 }
\DeclareMathOperator{\totalsum}{\Sigma}
\begin{document}
\singlespacing

\title{Two new results on maximal left-compressed intersecting families}
\author{Allan Flower\thanks{School of Mathematics, University of Birmingham, UK. {\tt axf774@student.bham.ac.uk}} \and Richard Mycroft\thanks{School of Mathematics, University of Birmingham, UK. {\tt r.mycroft@bham.ac.uk}. RM is grateful for financial support from EPSRC Standard Grant EP/R034389/1.}}

\maketitle
%
%
\begin{abstract}
    This paper presents two new results on the theory of maximal left-compressed intersecting families (MLCIFs). First, we answer a question raised by Barber by showing that the number of \(k\)-uniform MLCIFs on a ground set of size \(n\) grows as a doubly-exponential function of \(k\), which we identify up to a log factor in the exponent. Among these MLCIFs we identify \(k\) specific MLCIFs --- which we call the canonical MLCIFs --- as being in a meaningful way the most important MLCIFs. Specifically, our second main result shows that the canonical MLCIFs are precisely those which can have maximum weight among all \(k\)-uniform MLCIFs under a non-trivial increasing weight function, and moreover that each canonical MLCIF is the unique \(k\)-uniform MLCIF of maximum weight for some increasing weight function. This gives an interesting generalisation of the Erd\H{o}s--Ko--Rado theorem to a notion of size which places greater significance on some elements of the ground set than others.
\end{abstract}
%
%
\section{Introduction} \label{section:introduction}

The Erd\H{o}s--Ko--Rado theorem \cite{Erdos_Ko_Rado} --- one of the earliest and most influential results in Extremal Set Theory ---  states that if \(\F \subseteq \binom{[n]}{k}\) is intersecting and \(n \geq 2k\), then \(|\F|~\leq~\binom{n-1}{k-1} \). For \(n > 2k\), this upper bound is only attained by stars centred at a single element~\(i\), that is, the families \(S_i \defeq \{ F \in \binom{[n]}{k} \mid i \in F \} \) for each \(i \in [n]\). On the other hand, for \(n = 2k\) there are many intersecting families that attain the Erd\H{o}s--Ko--Rado upper bound, each containing exactly one set from every pair of complementary sets \(F, F^{\text{c}} \in \binom{[2k]}{k}\) where \(F^{\text{c}} \defeq [2k] \setminus F\).

The original proof of the Erd\H{o}s--Ko--Rado theorem used a technique called compression (also known as shifting) which has proved very fruitful in the theory of extremal set systems. Given sets \(G= \{y_i\}_{i \in [k]} \) and \(F = \{x_i\}_{i \in [k]} \) in \(\binom{[n]}{k}\) whose elements are ordered so that \(y_1 < \dots < y_k\) and \(x_1 < \dots < x_k\), say that \(G\) is a \emph{left-compression} of \(F\) (denoted \(\lc{G}{F}\)) if \(y_i \leq x_i\) for each \(i \in [k]\). Similarly, a family \(\F \subseteq \binom{[n]}{k}\) is \emph{left-compressed} if \(\F\) is closed under left-compressions (in other words, for each set \(F \in \F\), every set \(F' \in \binom{[n]}{k}\) with \(\lc{F'}{F}\) is also contained in \(\F\)). Observe that the relation \(\leq_{\text{LC}} \) is a partial order on \(\binom{[n]}{k}\). 

A \(k\)-uniform \emph{maximal left-compressed intersecting family (MLCIF)} on \([n]\) is an intersecting family \(\mathcal{F} \subseteq \binom{[n]}{k}\) which is left-compressed and which is maximal with respect to inclusion. It is a well-known standard fact that for any intersecting family \(\F \subseteq \binom{[n]}{k}\) there exists a left-compressed intersecting family \(\F' \subseteq \binom{[n]}{k}\) with \(|\F'| = |\F|\). (This follows from the fact that a left-compression can be achieved by a sequence of individual shifts, each of which preserves the intersecting property of a family. For more details we recommend the survey by Frankl~\cite{Frankl_shifting_survey} which gives a comprehensive overview of properties and applications of left-compressions.)
It follows that for each \(n\) and \(k\) there is a \(k\)-uniform MLCIF on \([n]\) which is a largest \(k\)-uniform intersecting family on \([n]\), so MLCIFs are a natural object of study when considering the Erd\H{o}s--Ko--Rado theorem and related questions concerning intersecting families. This paper presents two new results on the theory of MLCIFs.

\subsection{The number of MLCIFs on \texorpdfstring{\(n\)}{n} elements.}
Let \(\M{k}\) denote the set of all \(k\)-uniform MLCIFs on \([2k]\). \citet{barber-max_hitting} showed that for \(n \geq 2k\) every MLCIF in \(\M{k}\) extends to a unique \(k\)-uniform MLCIF on \([n]\), and each \(k\)-uniform MLCIF on \([n]\) arises in this way; this result is included as Lemma~\ref{lem:extension lemma} of this paper. In particular for \(n \geq 2k\) the number of distinct \(k\)-uniform MLCIFs on \([n]\) is precisely \(|\M{k}|\), so depends only on \(k\) and not on \(n\). It is then straightforward to give a doubly-exponential upper bound on this quantity. Indeed, \(\binom{[2k]}{k}\) is the union of \(\frac{1}{2}\binom{2k}{k}\) pairs of complementary sets, and any maximal intersecting family \(\F \subseteq \binom{[2k]}{k}\) includes precisely one set from each such pair; it follows that \(|\M{k}| \leq 2^{\frac{1}{2}\binom{2k}{k}}\).

\citet{barber-blog} (see also \cite{barber-thesis}) used Haskell to compute \(|\M{k}|\) for small values of \(k\), and entered these as entry A300099 of the Online Encyclopedia of Integer Sequences~\cite{oeis-barber}. He remarked that this number appears to grow as a double exponential in \(k\), and asked for a proof that this is indeed the case. Our first main result gives an affirmative answer to this question, and gives asymptotic bounds on \(|\M{k}|\) differing only by a log factor in the exponent.

\begin{theorem} \label{thm:main_double-exp}
    For each integer \(k \geq 2\) we have \(2^{\frac{1}{2}\binom{k-1}{\floor{k/2}}} \leq |\M{k}| \leq 2^{\frac{1}{2}\binom{2k}{k}}\). Moreover, for \(k\) sufficiently large we have 
    \(
    2^{\frac{1}{9}k^{-3/2}\binom{2k}{k}} < |\M{k}| < 2^{7\log_2(k)k^{-3/2}\binom{2k}{k}}
    \).
\end{theorem}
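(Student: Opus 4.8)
The plan is to reduce both bounds to understanding the middle layer of the poset $Q \defeq \bigl(\binom{[2k]}{k}, \lcposet\bigr)$. The key first step is an order-theoretic reformulation: $\F$ is a $k$-uniform MLCIF on $[2k]$ if and only if $\F$ is a down-set of $Q$ containing exactly one of $F$ and $F^{\mathrm c}$ for every $F\in\binom{[2k]}{k}$. Such an $\F$ is left-compressed by definition, intersecting because two $k$-subsets of $[2k]$ are disjoint exactly when complementary, and maximal because enlarging it would introduce a complementary pair; conversely, if $\F$ is an MLCIF omitting both $F$ and $F^{\mathrm c}$, then $\F$ together with the $\lcposet$-down-closure of $F$ is a strictly larger left-compressed intersecting family --- still intersecting, since $G\lcposet F$ forces $F^{\mathrm c}\lcposet G^{\mathrm c}$, so $G^{\mathrm c}\in\F$ would give the contradiction $F^{\mathrm c}\in\F$ --- contradicting maximality. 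In particular every MLCIF is a down-set of $Q$, so $|\M k|$ is at most the number of antichains of $Q$. It is also convenient to identify $Q$ with the lattice $L(k,k)$ of Young diagrams inside a $k\times k$ box, via $\{x_1<\dots<x_k\}\mapsto(x_k-k,\dots,x_1-1)$; then complementation becomes complementation of a diagram within the box, an order-reversing involution carrying the rank function $\mathrm{rk}$ to $k^2-\mathrm{rk}$.

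For the upper bounds, the number of down-sets of $Q$ equals its number of antichains, and every antichain has size at most the width $w$ of $Q$. Since $L(k,k)$ is rank-symmetric, rank-unimodal and has the Sperner property (proved by Stanley via the hard Lefschetz theorem, with an elementary argument later given by Proctor), $w$ equals the central coefficient $[q^{\lfloor k^2/2\rfloor}]\binom{2k}{k}_q$ of a Gaussian binomial coefficient. A standard local-limit estimate for these coefficients --- equivalently, asymptotic normality, with variance $\tfrac{k^2(2k+1)}{12}=\Theta(k^3)$, of the rank of a uniformly random Young diagram in a $k\times k$ box --- gives $w\leq C_1 k^{-3/2}\binom{2k}{k}$ for an absolute constant $C_1$ (in fact $w\sim\sqrt{3/\pi}\,k^{-3/2}\binom{2k}{k}$). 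Feeding this into the elementary bound $|\M k|\leq\#\{\text{antichains of }Q\}\leq\sum_{i\leq w}\binom{|Q|}{i}\leq(e|Q|/w)^{w}$ with $|Q|=\binom{2k}{k}$ and taking logarithms gives $\log_2|\M k|\leq w\log_2(e|Q|/w)\leq C_1 k^{-3/2}\binom{2k}{k}\bigl(\tfrac32\log_2 k+O(1)\bigr)<7\log_2(k)\,k^{-3/2}\binom{2k}{k}$ for all large $k$; the bound $|\M k|\leq 2^{\frac12\binom{2k}{k}}$ is the one already observed in the introduction.

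For the lower bounds I would create many MLCIFs by independent choices just above the middle rank. Put $r=\lfloor k^2/2\rfloor+1$ (so $k^2-r<r$ for $k\geq3$), let $S_1=\{F:1\in F\}$, and let $A_0=S_1\cap\{F:\mathrm{rk}(F)=r\}$, an antichain in $Q$. For any $\mathcal A\subseteq A_0$ let $\mathcal B$ be the $\lcposet$-up-closure of $\mathcal A$ inside $S_1$ and set $\F_{\mathcal A}=(S_1\setminus\mathcal B)\cup\{B^{\mathrm c}:B\in\mathcal B\}$. A short case analysis --- using that every member of $\mathcal B$ has rank $\geq r>k^2-\mathrm{rk}(B)$ for every $B\in\mathcal B$, so no $G,B\in\mathcal B$ satisfy $G\lcposet B^{\mathrm c}$ --- shows $\F_{\mathcal A}$ is left-compressed and meets each complementary pair once, hence is an MLCIF, and $\mathcal A\mapsto\F_{\mathcal A}$ is injective (recover $\mathcal A$ as the set of $\lcposet$-minimal elements of $S_1\setminus\F_{\mathcal A}$). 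Thus $|\M k|\geq2^{|A_0|}$ with $|A_0|=[q^{r}]\binom{2k-1}{k-1}_q$, a coefficient lying $O(k)$ --- hence $o(\text{standard deviation})$ --- from the centre of that Gaussian binomial, so the same local-limit estimate yields $|A_0|\geq\tfrac19 k^{-3/2}\binom{2k}{k}$ once $k$ is large. For the cruder bound valid for all $k\geq2$, one needs only the unimodality of $\binom{2k}{k}_q$ together with its having at most $k^2+1$ coefficients --- so its central coefficient is at least $\binom{2k}{k}/(k^2+1)$, which comfortably exceeds $\binom{k-1}{\lfloor k/2\rfloor}$ --- plus the observation that for $k$ even the self-complementary rank level $\{F:\mathrm{rk}(F)=k^2/2\}$ lets one freely choose a representative of each of its complementary pairs (an analogous, slightly lossier, count handles $k$ odd), with the finitely many small $k$ checked directly.

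The step I expect to be the main obstacle is the common reliance on the middle rank size of $Q$: one needs the Sperner property (to know the width is attained at the middle rank, so that counting antichains of $Q$ is genuinely governed by that single number) together with two-sided estimates pinning that rank size down to order $k^{-3/2}\binom{2k}{k}$ --- and since the lower-bound construction is forced to operate at rank $\approx k^2/2$, which is order $k$ above the peak of $\binom{2k-1}{k-1}_q$, the estimate must be uniform over an $O(k)$-wide window around the peak rather than only at it. Once the reformulation and these coefficient estimates are available, the remaining verifications --- that the displayed families are left-compressed and intersecting and that distinct choices give distinct families --- are routine.
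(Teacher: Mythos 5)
Your proposal is correct in substance and its skeleton matches the paper's: both rest on the observation that a \(k\)-uniform MLCIF on \([2k]\) is exactly a down-set of \(\poset\) containing one set from each complementary pair, on the order isomorphism with \(L(k,k)\), and on the Sperner/rank-unimodal/rank-symmetric properties of \(L(k,k)\) to bound the number of antichains (equivalently, of boundary families \(\B(\F)\)) for the upper bound. Where you genuinely diverge is the lower bound. The paper works with the self-complementary middle \emph{sum} layer \(\Pk\): for even \(k\) it picks one set from each pair of \(\Pk\), takes down-closures, and extends to MLCIFs, handling odd \(k\) via the monotonicity \(|\M{k}|\leq|\M{k+1}|\); the size of \(\Pk\) comes from Prodinger's exact asymptotic for the central coefficient, and the weak all-\(k\) bound comes from an explicit construction (unions of \(k/2\) of the pairs \(\{j,2k+1-j\}\)) rather than from unimodality. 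Your construction instead perturbs the star \(S_1\) by flipping an up-set generated inside \(S_1\) at rank \(\lfloor k^2/2\rfloor+1\) to complements. This is attractive because it produces genuine MLCIFs directly (no extension step) and treats all parities of \(k\) uniformly, but the price is that you need a local limit estimate for \(\binom{2k-1}{k-1}_q\) that is uniform over an \(O(k)\)-window about its peak, whereas the paper only ever needs the single central value of \(\binom{2k}{k}_q\), for which it can cite Prodinger. Such window-uniform estimates are indeed classical (asymptotic normality of the rank of a uniform partition in a box, with variance \(\Theta(k^3)\)), but you should cite a concrete source rather than call it standard. Two smaller points: your one-line maximality argument (``add the down-closure of \(F\)'') is incomplete as stated, since \(\LC(F)\) can itself contain a complementary pair when \(\lc{F^{\mathrm c}}{F}\); you must, as the paper does, take \(F\) to be \(\lcposet\)-minimal among sets with both \(F,F^{\mathrm c}\notin\F\), which rules this out. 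And your treatment of the all-\(k\geq 2\) bound for odd \(k\) (``an analogous, slightly lossier count'') is vaguer than the paper's clean reduction to even \(k\) via \(|\M{k}|\leq|\M{k+1}|\); spelling out one of these two routes is needed to make that part complete.
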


We give the proof of Theorem~\ref{thm:main_double-exp} in Section~\ref{section: double exponential}.

\subsection{Optimal MLCIFs under increasing weight functions.}

One interesting line of research extending the Erd\H{o}s--Ko--Rado theorem asks for largest intersecting families on \([n]\) under notions of size which place greater significance on some elements of \([n]\) than on others. \citet{borg-max_hitting} observed that for this it is natural to consider largest MLCIFs rather than arbitrary intersecting families, since otherwise the answer will typically be that the star centred at the most valuable element is trivially the optimal solution. Consequently, Borg considered the following problem. For a fixed set \(X \subseteq [n]\), what are the largest \(k\)-uniform MLCIFs \(\mathcal{F}\) on \([n]\) if our measure of size only counts those elements of \(F\) which intersect \(X\)? If \(1 \in X\) then it follows immediately from the Erd\H{o}s--Ko--Rado that the star centred at 1 is the unique largest \(k\)-uniform MLCIF in this sense. However, sets \(X\) with \(1 \notin X\) give more interesting behaviour, and the problem was resolved (for sufficiently large \(n\)) through a series of works by \citet{borg-max_hitting}, \citet{barber-max_hitting} and \citet{Bowtell&Mycroft-max_hitting}.

The second main result of this paper concerns a similar variation of the Erd\H{o}s--Ko--Rado problem, in which we allow the `value' of an element of \([n]\) to vary continuously in the following way. Fix \(k, n \in \mathbb{N}\) and a weight function  
\(\omega: [n] \rightarrow \mathbb{R}_{\geq 0}\). For each \(F=\{x_i\}_{i \in [k]} \in \binom{[n]}{k}\) and each family \(\F = \{F_1,\dots,F_m\} \subseteq \binom{[n]}{k}\), define
\[
    \omega(F) = \prod_{i=1}^{k} \omega(x_i) \textnormal{ and } \omega(\F) = \sum_{i=1}^{m} \omega(F_i)
\]
to be the weights of \(F\) and \(\F\) respectively. So the weight of a set is the product of the weights of its elements, whilst the weight of a family of sets is the sum of the weights of the sets within the family. 

We say that a weight function \(\omega\) is \emph{trivial} if every \(k\)-uniform MLCIF \(\F\) on \([n]\) has \(\omega(\F) = 0\), and \emph{non-trivial} otherwise (note that this depends on the value of \(k\) but this will always be clear from the context). Also, we say that a \(k\)-uniform MLCIF \(\F\) is \emph{optimal} for~\(\omega\) if \(\omega(\F) \geq \omega(\mathcal{G})\) for every \(k\)-uniform MLCIF \(\mathcal{G}\), and that \(\F\) is \emph{uniquely optimal} for~\(\omega\) if this inequality is strict for every \(k\)-uniform MLCIF \(\mathcal{G}\) with \(\mathcal{G} \neq \F\). Finally, we say that \(\omega\) is \emph{increasing} if \(\omega(i) \leq \omega(j)\) whenever \(i \leq j\). It is natural to consider MLCIFs of maximum weight under increasing weight functions since the goal of maximising the weight of a family is in direct opposition to the requirement that the family is left-compressed: the latter favours elements with lower weights, whilst the increasing weight function places greater significance on higher-weight elements. Note that if \(n < 2k\) then the family \(\F = \binom{[n]}{k}\) is trivially an optimal MLCIF for every weight function, so as in the Erd\H{o}s--Ko--Rado theorem the interesting problem is the case \(n \geq 2k\).

For fixed \(k, n \in \mathbb{N}\) (which will always be clear from the context) and for each \(i \in [k]\), define 
    \[
        \langle i \rangle \defeq \big\{ F \in \binom{[n]}{k} : |F \cap [2i-1]| \geq i \big\},
    \]
and observe that if \(n \geq 2k\) then \(\langle i \rangle\) is a \(k\)-uniform MLCIF on \([n]\). We call \(\langle i \rangle\) the \emph{\(i^{\text{th}}\) canonical MLCIF}. Amongst the great number of \(k\)-uniform MLCIFs on \([n]\) (which is doubly-exponential in \(k\) by Theorem~\ref{thm:main_double-exp}), the \(k\) canonical MLCIFs are in a meaningful sense the principal MLCIFs. Indeed, we shall see in Section~\ref{section:key ideas} that the canonical MLCIFs are precisely those MLCIFs which are formed by left-compressions of a single set. Moreover, our second main result, Theorem~\ref{thm:main_increasing}, demonstrates that for sufficiently large \(n\) only the canonical MLCIFs can be optimal under a non-trivial increasing weight function.

\begin{theorem} \label{thm:main_increasing}
Fix \(k \in \mathbb{N}\) and let \(n \geq 3k^3\binom{2k}{k}\). If \(\omega: [n] \rightarrow \mathbb{R}_{\geq 0}\) is a non-trivial increasing weight function, and \(\F\) is a \(k\)-uniform MLCIF on \([n]\) which is optimal for \(\omega\), then \(\F\) is a canonical MLCIF. Moreover, for each canonical MLCIF \(\F\) there exists an increasing weight function \(\omega\) for which \(\F\) is uniquely optimal for \(\omega\).
\end{theorem}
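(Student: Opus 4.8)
The plan is to split the statement into its two halves and attack them separately. For the first half, suppose $\omega$ is a non-trivial increasing weight function and $\F$ is a $k$-uniform MLCIF on $[n]$ which is optimal for $\omega$; I want to show $\F$ is canonical. By Lemma~\ref{lem:extension lemma}, $\F$ is the unique extension to $[n]$ of some MLCIF $\F_0 \in \M{k}$, and the canonical MLCIFs are exactly the extensions of $\langle i \rangle \cap \binom{[2k]}{k}$. The key structural fact, promised in Section~\ref{section:key ideas}, is that a MLCIF is canonical if and only if it is the left-compressed closure of a single set; so I should show that if $\F$ is optimal for some non-trivial increasing $\omega$, then $\F$ is generated (under $\leq_{\text{LC}}$) by a single set. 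The natural approach is a swapping/exchange argument: assume for contradiction that $\F$ is not generated by a single set, so it has at least two $\leq_{\text{LC}}$-maximal sets $A$ and $B$. I would then try to modify $\F$ — removing the up-set generated by the $\leq_{\text{LC}}$-larger of $A,B$ (in weight) and checking that what remains can be completed to a MLCIF of strictly greater $\omega$-weight — contradicting optimality. Because $n$ is huge compared to $\binom{2k}{k}$, one has enormous freedom: the "tail" elements $2k+1, \dots, n$ all have weight at least that of any element in $[2k]$, and since $\omega$ is non-trivial at least one set of positive weight lies in $\F$, which forces $\omega$ to be positive and in fact rapidly increasing on a long final segment of $[n]$; this should let me dominate the weight of any "extra" maximal set by the weight of the single-set closure.

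The cleanest route for the first half is probably to reduce to comparing $\F$ directly against the canonical MLCIFs. Fix the weight function and let $j$ be the largest index with $\omega(j) > 0$ if $\omega$ is eventually constant, or more usefully observe that since $\omega$ is increasing and non-trivial, $\omega(n) \geq \omega(n-1) \geq \cdots$ and the ratio $\omega(n)/\omega(2k)$ can be leveraged. Each MLCIF contains exactly one of $F, F^{\mathrm c}$ for each complementary pair in $\binom{[n]}{k}$; the canonical MLCIF $\langle i \rangle$ is, intuitively, the "greediest" choice that keeps only $i$ forced elements in $[2i-1]$ and is otherwise as permissive as possible. I would show that replacing any non-canonical $\F$ by the appropriate $\langle i \rangle$ (choosing $i$ so that $\langle i \rangle \supseteq \F$ on the relevant part, or so that $\langle i \rangle$ captures the highest-weight sets that $\F$ omits) strictly increases weight, using that $n \geq 3k^3 \binom{2k}{k}$ guarantees the gain from including one high-weight set near the top of $[n]$ outweighs the total loss of at most $\binom{2k}{k}$ low-weight sets (supported on $[2k]$). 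Here the bound on $n$ enters quantitatively: a set of $\F$ omitted by $\langle i \rangle$ but present because $\F$ is not canonical is "heavy" (contains large elements), and a telescoping argument over the long tail shows its weight alone dominates. The main obstacle I anticipate is precisely this weight-comparison bookkeeping: I must track which sets $\langle i \rangle$ and $\F$ differ on, bound the number of such sets by something like $\binom{2k}{k}$, and show the increasing hypothesis plus the size of $n$ forces the sign of the net change; getting a clean argument rather than a messy case analysis over the $k$ canonical families is the delicate part.

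For the second half — that each canonical MLCIF $\langle i \rangle$ is uniquely optimal for some increasing weight function — I would construct $\omega$ explicitly. The idea is to make $\omega$ increase steeply enough that weight is dominated by how many large elements a set contains, but calibrated so that the constraint "$|F \cap [2i-1]| \geq i$" is exactly the binding one: set $\omega(1) = \cdots = \omega(2i-1) = a$ for a small constant $a$, and $\omega(j) = b$ for $j \geq 2i$ with $b \gg a$, so that $\omega(F) = a^{|F \cap [2i-1]|} b^{|F \setminus [2i-1]|}$ is a strictly decreasing function of $|F \cap [2i-1]|$. Then among all MLCIFs, $\langle i \rangle$ maximises the number of sets with small $[2i-1]$-intersection, hence maximises weight; uniqueness follows because any MLCIF $\G \neq \langle i \rangle$ either omits some set of $\langle i \rangle$ with exactly $i$ elements in $[2i-1]$ (costing a top-weight set) or contains a set with $\leq i-1$ such elements (impossible, as that set is not intersecting-compatible with all of $\langle i \rangle$'s "forced" part) — so $\G$ strictly loses. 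Two subtleties: one must check $\langle i \rangle$ is genuinely intersecting and maximal (already noted in the excerpt), and one must verify no \emph{other} MLCIF ties; this should come down to showing that $\langle i \rangle$ is the unique maximal intersecting family all of whose sets meet $[2i-1]$ in at least $i$ points, which is a short combinatorial check. Slight care is also needed to make the weights all positive (to keep $\omega$ non-trivial and the optimum nondegenerate), and to choose $b/a$ large in terms of $n$ and $k$ so that a single swap in the top block dwarfs any number of swaps in the bottom block; given the flexibility this is routine once the structure is set up.
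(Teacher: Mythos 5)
Your proposal for the first half correctly identifies the target (compare $\F$ directly against the canonical families and exploit the size of $n$), but the step you yourself flag as ``the delicate part'' is where essentially all of the content lies, and the quantitative picture you sketch for it is wrong. You claim the comparison comes down to ``the total loss of at most $\binom{2k}{k}$ low-weight sets (supported on $[2k]$)'', but a non-canonical MLCIF and its nearest canonical family typically differ on a number of sets that grows with $n$, most of which contain tail elements: for instance, for $k=4$ the MLCIF $\LC(\{1,2,n-1,n\})\cup\LC(\{2,4,5,n\})$ contains $\Theta(n)$ sets of the form $\{2,4,5,x\}$ with $x>2k$, whose total weight under an increasing $\omega$ can be comparable to $\omega(\langle 3\rangle)$ itself. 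The paper's proof (Theorem~\ref{thm:second_increasing}) supplies exactly the missing machinery: it decomposes $\F$ into \emph{strong} sets $\mathcal{S}$ (those with the maximum possible number of tail elements given their index) and non-strong sets $\W_j$; Lemma~\ref{lem:small_generators_are_small} shows all strong sets share a common index $\iota$; the decisive observation is that if $\F$ is not canonical then $\mathcal{S}$ must avoid the entire subfamily $\X_{[\iota,2\iota-1]}\subseteq\langle\iota\rangle$, which carries at least a $1/\binom{2k}{k}$ fraction of $\omega(\langle\iota\rangle)$ because $\omega$ is increasing, giving $\omega(\mathcal{S})\leq(1-c_k)\omega(\langle\iota\rangle)$; and each $\W_j$ is negligible via the ratio $2k/(n-3k)$. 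The resulting bound is a strict biased average of the $\omega(\langle i\rangle)$, whence some canonical family strictly beats $\F$. Your exchange-argument alternative (delete one up-set and recomplete) is not developed enough to assess, but it faces the same obstacle.

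The second half of your proposal contains a genuine error: the construction is inverted. The canonical family $\langle i\rangle$ consists of the sets with \emph{large} intersection with $[2i-1]$ (at least $i$ elements), not small, so a weight function with $\omega\equiv a$ on $[2i-1]$ and $\omega\equiv b$ on $[2i,n]$ with $b\gg a$ penalizes precisely the sets of $\langle i\rangle$. Concretely, for $k=2$ and $i=1$ your weights give $\omega(\langle 1\rangle)=(n-1)ab$ while $\omega(\langle 2\rangle)=2ab+b^2$, so taking $b/a>n$ (as your prescription ``choose $b/a$ large in terms of $n$ and $k$'' dictates) makes $\langle 2\rangle$ strictly heavier than $\langle 1\rangle$, and your claim that $\langle i\rangle$ ``maximises the number of sets with small $[2i-1]$-intersection'' fails. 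The paper's Lemma~\ref{lem:canonical_families_unique} instead sets the threshold at $i$ rather than $2i$: it takes $\omega_i=0$ on $[i-1]$ and $\omega_i=1$ on $[i,n]$. Every set of $\langle j\rangle$ with $j<i$ is forced to meet $[i-1]$ and so has weight $0$, while for $j>i$ one beats $\langle j\rangle$ by a pure size count, $|\langle j\rangle|\leq k\binom{2k}{k}\binom{n-2k}{k-i-1}<\binom{n-2k}{k-i}\leq\omega_i(\langle i\rangle)$, using the lower bound on $n$. Uniqueness among non-canonical families then comes from Corollary~\ref{cor:not_unique}, not from a separate swap argument.
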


We prove Theorem~\ref{thm:main_increasing} in Section~\ref{section:weight functions}.

\subsection{Notation}
The following notation is used throughout this paper. For \(m, n \in \mathbb{N}\) we write \([n]\) to denote the set \(\{1, 2, \dots, n\}\) and \([m, n]\) to denote the set \mbox{\(\{m, m+1, \dots , n\}\)}. For a set \(X\), the set of all subsets of \(X\) of size \(k\) is denoted by \(\binom{X}{k}\). We use upper-case letters in italic font (such as \(F\) and \(G\)) for finite sets of natural numbers, and calligraphic characters (such as \(\F\) and \(\G\)) to denote families of sets. When writing a set \(F = \{x_1, \dots, x_k\}\), it is always to be assumed that \(x_i < x_j\) whenever \(i < j\), and this set is often written as \(\{x_i\}_{i \in [k]}\). When \(F \in \binom{[2k]}{k}\), we denote the complement of \(F\) by \(F^{\text{c}} \defeq [2k] \setminus F\).

\section{Key ideas}\label{section:key ideas}
We begin by presenting two key results on MLCIFs which were given by \citet{barber-max_hitting}. The first is that for \(n \geq 2k\) there is a one-to-one correspondence between MLCIFs on \([n]\) and MLCIFs on \([2k]\).

\begin{lemma}[{\cite[Lemma~8]{barber-max_hitting}}] \label{lem:extension lemma} 
    Let \(n \geq 2k\). For each MLCIF \(\F \subseteq \binom{[2k]}{k}\) there is a unique MLCIF \(\G(\F) \subseteq \binom{[n]}{k}\) with \(\F \subseteq \G(\F)\). Moreover, every MLCIF \(\G \subseteq \binom{[n]}{k}\) has \(\G = \G(\F)\) for a unique MLCIF \(\F \subseteq \binom{[2k]}{k}\). 
\end{lemma}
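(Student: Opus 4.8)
The plan is to realise the correspondence through an explicit pair of mutually inverse maps. Given an MLCIF \(\F\subseteq\binom{[2k]}{k}\) I will build an MLCIF \(\G(\F)\subseteq\binom{[n]}{k}\), show it is inverse to the restriction map \(\G\mapsto\G\cap\binom{[2k]}{k}\), and then read off both halves of the lemma. (For \(n=2k\) these maps are the identity, so the substance is the case \(n>2k\).)

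The first ingredient is a description of which left-compressed families are intersecting: a left-compressed \(\G\subseteq\binom{[n]}{k}\) is intersecting if and only if it contains no complementary pair \(\{F,F^{\text{c}}\}\) with \(F\in\binom{[2k]}{k}\). One direction is trivial. For the converse, if \(F,G\in\G\) are disjoint then \(|F\cup G|=2k\); while some element of \(F\cup G\) exceeds \(2k\) we let \(z\) be the largest such (so \(z\) lies in \(F\) or \(G\), say \(F\)) and let \(m\) be the least element of \([2k]\setminus(F\cup G)\). Then \(m<z\), so replacing \(z\) by \(m\) in \(F\) is a left-compression; hence \((F\setminus\{z\})\cup\{m\}\in\G\), this set is still disjoint from \(G\), and the largest element of the union has strictly decreased. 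Iterating, and using that \(\G\) is down-closed, we arrive at disjoint \(F',G'\in\G\) with \(F'\cup G'=[2k]\), that is, \(G'=(F')^{\text{c}}\).

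Next I would show that, for \(n\geq 2k\), every MLCIF \(\G\) on \([n]\) contains exactly one set from each complementary pair \(\{F,F^{\text{c}}\}\subseteq\binom{[2k]}{k}\), and, conversely, that on \([2k]\) every down-set of \(\binom{[2k]}{k}\) (in the order \(\lcposet\)) containing exactly one set from each such pair is an MLCIF — the latter because such a family is intersecting by the previous paragraph and is maximal, as enlarging it pulls in some \(G\) alongside the already-present \(G^{\text{c}}\). The first assertion — which amounts to an MLCIF being a maximal intersecting family, not merely maximal among left-compressed families — is the main obstacle. That \(\G\) contains at most one of \(F,F^{\text{c}}\) is clear; suppose for contradiction that \(A,A^{\text{c}}\notin\G\) for some \(A\in\binom{[2k]}{k}\), and let \(H\) be an \(\lcposet\)-minimal element of \(\{B\in\binom{[2k]}{k}\setminus\G : B\lcposet A\}\) (this set contains \(A\)). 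If \(H^{\text{c}}\notin\G\), then the down-set generated by \(\G\cup\{H\}\) strictly contains \(\G\) but is still intersecting — a complementary pair in it would have to be \(\{H,H^{\text{c}}\}\) by minimality of \(H\), yet \(H^{\text{c}}\) lies neither in \(\G\) nor (again by minimality) below \(H\) — contradicting maximality of \(\G\). If instead \(H^{\text{c}}\in\G\), then complementation reverses \(\lcposet\) on \(\binom{[2k]}{k}\) (from the standard reformulation \(F\lcposet G\iff|F\cap[j]|\geq|G\cap[j]|\) for all \(j\), together with \(|F^{\text{c}}\cap[j]|=j-|F\cap[j]|\)), so \(H\lcposet A\) gives \(A^{\text{c}}\lcposet H^{\text{c}}\), whence \(A^{\text{c}}\in\G\) since \(\G\) is a down-set — again a contradiction.

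Finally I would assemble the correspondence. For \(G\in\binom{[n]}{k}\) let \(\pi(G)\) be the \(\lcposet\)-largest \(k\)-subset of \([2k]\) lying \(\lcposet\)-below \(G\) (this is well defined, since such subsets are closed under the coordinatewise maximum of their increasing enumerations; it equals \(G\) when \(G\subseteq[2k]\), and \(\pi(G')\lcposet\pi(G)\) whenever \(G'\lcposet G\)). For an MLCIF \(\F\) on \([2k]\) set \(\G(\F)\defeq\{G\in\binom{[n]}{k}:\pi(G)\in\F\}\). Monotonicity of \(\pi\) makes \(\G(\F)\) a down-set; since \(\G(\F)\cap\binom{[2k]}{k}=\F\), it meets each complementary pair exactly once and hence (by the first ingredient) is an intersecting family containing \(\F\); and it is maximal, because any \(G\notin\G(\F)\) has \(\pi(G)\notin\F\), so \(\pi(G)^{\text{c}}\in\F\subseteq\G(\F)\) and the down-set generated by \(\G(\F)\cup\{G\}\) contains the complementary pair \(\{\pi(G),\pi(G)^{\text{c}}\}\). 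Thus \(\G(\F)\) is an MLCIF on \([n]\) restricting to \(\F\). Conversely, given any MLCIF \(\G\) on \([n]\), its restriction \(\F\defeq\G\cap\binom{[2k]}{k}\) is a down-set meeting each complementary pair exactly once, hence an MLCIF on \([2k]\); and \(\G\subseteq\G(\F)\) because \(G\in\G\) forces \(\pi(G)\in\G\cap\binom{[2k]}{k}=\F\), so \(\G=\G(\F)\) by maximality of \(\G\). The uniqueness claims then follow immediately: any MLCIF on \([n]\) containing \(\F\) restricts to \(\F\) and hence equals \(\G(\F)\), and if \(\G=\G(\F)\) then necessarily \(\F=\G\cap\binom{[2k]}{k}\).
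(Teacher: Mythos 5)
The paper itself offers no proof of this lemma --- it is quoted verbatim from Barber's work --- so your argument can only be judged on its own terms, and on those terms it is correct and self-contained. Your three ingredients all check out: (i) the reduction of the intersecting property of a left-compressed family to the absence of a complementary pair from \(\binom{[2k]}{k}\) works because replacing a single element \(z\) of a set by a smaller element \(m\notin F\) is indeed a left-compression, the replacement preserves disjointness, and the maximum of \(F\cup G\) strictly decreases; (ii) the claim that an MLCIF meets every complementary pair of \(\binom{[2k]}{k}\) exactly once is the real content, and your two-case analysis on an \(\lcposet\)-minimal missing set \(H\) below \(A\), using the order-reversal \(\lc{F}{G}\iff\lc{G^{\text{c}}}{F^{\text{c}}}\), is sound; (iii) the projection \(\pi\) is well defined since the \(k\)-subsets of \([2k]\) lying below a given \(G\) form a finite upward-directed (indeed join-closed) family containing \([k]\), and the monotonicity of \(\pi\) gives everything needed to verify that \(\G(\F)\) and the restriction map are mutually inverse. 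This is essentially the same mechanism Barber exploits (the bijection between MLCIFs and their traces on \([2k]\) via complementary pairs), so nothing is lost relative to the cited source; if anything, writing out the explicit projection \(\pi\) and the supersaturation-free characterisation of intersecting left-compressed families makes the correspondence more transparent than a bare citation.
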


As noted in the introduction, it follows from Lemma~\ref{lem:extension lemma} that the number of \(k\)-uniform MLCIFs on \([n]\) is precisely \(|\M{k}|\), the number of \(k\)-uniform MLCIFs on \([2k]\). The second result of Barber that we use is the following fact which was established in the proof of Lemma~\ref{lem:extension lemma} as a consequence of the maximality of MLCIFs.

\begin{lemma}[\cite{barber-max_hitting}] \label{lem:n_expansion_lemma}
    Let \(\F \subseteq \binom{[n]}{k}\) be an MLCIF. For each \(j \in [k]\), if \(F = \{x_i\}_{i \in [k]} \in \F\) has \(k-j\) many elements strictly greater than \(k+j\), then \(\F\) contains the element \(F' = \{x_i\}_{i \in [j]} \cup [n-k+j+1, n]\).
\end{lemma}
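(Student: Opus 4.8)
The plan is to derive $F' \in \F$ --- where $F' \defeq \{x_i\}_{i \in [j]} \cup [n-k+j+1, n]$ --- from the maximality of $\F$. First I would record the closure property that maximality yields (via the shifting technique mentioned in the introduction): if $G \in \binom{[n]}{k}$ meets every member of $\F$, then $G \in \F$. Indeed, were this false one could take such a $G$ minimal under $\lcposet$; a routine check --- the same one showing that an elementary left-shift preserves the intersecting property --- gives that every proper left-compression of $G$ also meets every member of $\F$ and hence lies in $\F$ by minimality, so $\F \cup \{G\}$ would be a left-compressed intersecting family properly containing $\F$. Thus it suffices to show that $F'$ meets every member of $\F$. (Note $F' \in \binom{[n]}{k}$: as $x_j$ is followed by $k-j$ further elements of $F$ we have $x_j \leq n-k+j$, so $\{x_i\}_{i\in[j]}$ and $[n-k+j+1,n]$ are disjoint sets of total size $k$.)

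We may assume $n \geq 2k$, since otherwise $\F = \binom{[n]}{k}$ and the claim is immediate. Suppose for a contradiction that some $H \in \F$ is disjoint from $F'$. Then $H$ avoids $\{x_i\}_{i\in[j]}$ and $H \subseteq [n] \setminus [n-k+j+1,n] = [n-k+j]$. The heart of the argument is to produce a left-compression of $H$ that is disjoint from $F$, contradicting that $\F$ is intersecting. Since $F$ has exactly $k-j$ elements greater than $k+j$, namely $x_{j+1},\dots,x_k$, we have $x_1 < \dots < x_j \leq k+j$, so $\{x_i\}_{i\in[j]} \subseteq [k+j]$ and $H^\flat \defeq [k+j]\setminus\{x_i\}_{i\in[j]}$ has exactly $k$ elements. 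By construction $H^\flat$ avoids $\{x_i\}_{i\in[j]}$, and it avoids $\{x_{j+1},\dots,x_k\}$ because $H^\flat \subseteq [k+j]$ while each of $x_{j+1},\dots,x_k$ exceeds $k+j$; hence $H^\flat \cap F = \emptyset$.

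It then remains to check that $H^\flat \lcposet H$, after which left-compressedness of $\F$ forces $H^\flat \in \F$, contradicting $H^\flat \cap F = \emptyset$. To see this, write $H = \{h_i\}_{i\in[k]}$ and $H^\flat = \{h^\flat_i\}_{i\in[k]}$ and let $t$ be the number of elements of $H$ lying in $[k+j]$; these $t$ elements avoid $\{x_i\}_{i\in[j]}$ and so form a $t$-subset of $H^\flat$, giving $h^\flat_i \leq h_i$ for $i \leq t$, while for $i > t$ we have $h_i > k+j \geq h^\flat_i$. Thus $H^\flat \lcposet H$, which completes the contradiction and shows $F'$ meets every member of $\F$, so $F' \in \F$. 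I expect the only real obstacle to be this last verification that $H^\flat$ is a left-compression of $H$; the remainder is unpacking the definitions of $F'$ and $H^\flat$ and the hypothesis on $F$, plus recalling why maximality gives the closure property used at the start.
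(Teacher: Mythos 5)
Your proposal is correct. Note that the paper itself gives no proof of this lemma --- it is imported from Barber's work, where it is established inside the proof of the extension lemma --- so there is no in-paper argument to compare against; your self-contained derivation is sound and follows the natural route (show that \(F'\) meets every member of \(\F\), then invoke maximality). Both ingredients check out. First, maximality of \(\F\) among left-compressed intersecting families does yield the closure property that any \(k\)-set meeting every member of \(\F\) lies in \(\F\): your minimal-counterexample argument works because if \(G\) meets all of \(\F\) and \(G'\) is an elementary left-shift of \(G\) with \(G' \cap H = \emptyset\) for some \(H \in \F\), then the corresponding shift of \(H\) lies in \(\F\) (as \(\F\) is left-compressed) and is disjoint from \(G\), a contradiction --- exactly the computation you allude to. Second, the core step is right: since \(F \cap [k+j] = \{x_i\}_{i \in [j]}\) exactly, the set \(H^\flat = [k+j] \setminus \{x_i\}_{i\in[j]}\) is a \(k\)-set disjoint from \(F\), and your rank-by-rank comparison correctly shows \(\lc{H^\flat}{H}\) for any \(H\) avoiding \(\{x_i\}_{i\in[j]}\). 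One small remark: you never actually use that \(H\) avoids \([n-k+j+1,n]\), so your argument proves the stronger fact that every member of \(\F\) meets \(\{x_i\}_{i\in[j]}\); this is not a flaw, merely slack in the argument.
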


In other words, if \(F= \{x_i\}_{i \in [k]} \in \F\) and \(x_{j+1} > k+j\), then \(\F\) contains the set \(F'\) whose smallest \(j\) elements are the smallest \(j\) elements of \(F\) and whose remaining \(k-j\) elements are the largest \(k-j\) elements of \([n]\). For example, if a 5-uniform MLCIF \(\F\) on \(n \geq 10\) elements contains the set \( \{x,y,z,9,10\} \), then it also contains the set \( \{x,y,z, n-1,n\} \).

\subsection{Boundary sets}
Let \(\F\) be a \(k\)-uniform MLCIF on \([n]\). We refer to the \lcmax-maximal elements of \(\F\) as the \emph{boundary sets} of \(\F\). So \(B \in \F\) is a boundary set if there there is no \(F \in \F\) such that \(F \neq B\) and \(\lc{B}{F}\). Let \(\B(\F)\) be the collection of boundary sets of \(\F\). It follows that
\begin{equation} \label{eqn:boundary}
     \F = \bigcup_{B \in \B(\F)} \LC(B),
\end{equation}
where \(\LC(B)\) denotes the family of all sets \(F \in \binom{[n]}{k}\) with \(\lc{F}{B}\). 
From \eqref{eqn:boundary}, it follows that for distinct \(\F,\G \in \binom{[n]}{k}\) we have \(\B(\F) \neq \B(\G)\).
One crucial property of canonical MLCIFs is that they are precisely the MLCIFs with a unique boundary set. Indeed, for each \(i \in [k]\) define \[Z_i \defeq [i,2i-1]\cup[n-k+i+1,n]\] (we suppress the dependence on \(k\) and \(n\) as this will always be clear from the context). We then have~\(\langle i \rangle = \LC(Z_i)\), so \(Z_i\) is the unique boundary set of the canonical MLCIF~\(\langle i \rangle\). The fact that every other MLCIF has at least two boundary sets is asserted by the next proposition.

\begin{proposition} \label{prop:boundary}
Let \(\F \subseteq \binom{[n]}{k}\) be an MLCIF, where \(n \geq 2k\).
\begin{enumerate}[label = (\alph*)]
    \item For every \(F \in \F\) we have \(\lc{F}{Z_i}\) for some \(i \in [k]\).
    \item Either \(\F = \langle i \rangle\) for some \(i \in [k]\) or \(\F\) has at least two boundary sets.
\end{enumerate}
\end{proposition}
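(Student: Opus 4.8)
The plan is to prove part (a) first, as it does the main structural work, and then derive part (b) from it with a short combinatorial argument. For part (a), fix $F = \{x_i\}_{i\in[k]} \in \F$. Since $F$ is intersecting with every set in $\F$ and $\F$ is left-compressed, the key observation is that $F$ cannot be ``too far to the right'': concretely, I would show that there exists $i \in [k]$ with $x_i \le 2i-1$. Suppose not, so $x_i \ge 2i$ for every $i$; then I claim the set $F^\star = \{2,4,6,\dots,2k\}\setminus\{\text{something}\}$ — more carefully, I would build a set $G$ with $\lc{G}{F}$ that is disjoint from $F$ or that witnesses a failure of the intersecting property after one more compression, contradicting maximality/intersection. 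The cleanest route: if $x_i \ge 2i$ for all $i$, consider the ``odd'' set $O = \{1,3,5,\dots,2k-1\}$ restricted appropriately; one shows $\lc{O'}{F}$ for a suitable $O'$ with $|O' \cap F| $ forced to be small, eventually contradicting that $\F$ is intersecting (since a left-compressed intersecting family on $[2k]$ cannot contain a set all of whose elements are ``large''). Let $i$ be the largest index with $x_i \le 2i-1$; then the smallest $i$ elements of $F$ lie in $[2i-1]$, so $|F \cap [2i-1]| \ge i$, giving $F \in \langle i\rangle = \LC(Z_i)$, i.e. $\lc{F}{Z_i}$. (Here one also uses Lemma~\ref{lem:n_expansion_lemma} or a direct left-compression argument to handle the large elements of $F$ and confirm $\lc{F}{Z_i}$ rather than merely $F \in \langle i \rangle$; since $\langle i \rangle = \LC(Z_i)$ exactly, these coincide.)

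For part (b), suppose $\F$ has a unique boundary set $B$. By part (a), $\lc{B}{Z_i}$ for some $i \in [k]$, and by \eqref{eqn:boundary} we get $\F = \LC(B) \subseteq \LC(Z_i) = \langle i \rangle$. Now I use that $\langle i \rangle$ is itself an MLCIF (stated in the introduction) together with the maximality of $\F$: an MLCIF is maximal with respect to inclusion among left-compressed intersecting families, and $\F \subseteq \langle i\rangle$ with $\langle i\rangle$ left-compressed and intersecting forces $\F = \langle i \rangle$. Hence either $\F$ is some $\langle i\rangle$, or $\B(\F)$ has at least two elements. (One should double-check the degenerate possibility that $\F$ has \emph{no} boundary sets — but $\F$ is a nonempty finite poset under $\lcposet$, so it has at least one maximal element; $\F$ is nonempty because, e.g., $\{1\}\cup[k+1,2k]\cup\dots$ — more simply, $Z_1 \in \langle 1\rangle$ and any MLCIF contains the $\lcposet$-minimum set $[k]$, hence is nonempty.)

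The main obstacle I anticipate is the core claim inside part (a): that $x_i \le 2i-1$ for some $i$. This is where the intersecting hypothesis must really be used, and one must be careful that it is genuinely a statement about left-compressed intersecting families on $[n]$ for $n \ge 2k$, not just on $[2k]$. I would handle this by first reducing to $[2k]$: if $F \in \F$ has elements exceeding $2k$, apply Lemma~\ref{lem:n_expansion_lemma} in reverse or simply note that left-compressing $F$ produces a set $\tilde F$ with $\tilde F \subseteq [2k]$ and $\lc{\tilde F}{F}$, and it suffices to prove $\lc{\tilde F}{Z_i}$; then work entirely within $\binom{[2k]}{k}$, where the statement ``a left-compressed intersecting family cannot contain a set $F$ with $x_i \ge 2i$ for all $i$'' follows because such an $F$ would, after the single shift sending each $2i \mapsto 2i-1$ where legal, collide with its own complement-type set — equivalently $F$ and the set obtained by left-compressing $F^{\mathrm c}$ would both lie in $\F$ yet be disjoint. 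Making this last collision argument fully rigorous (tracking which shifts are legal) is the only genuinely delicate point; everything else is bookkeeping with the definitions of $Z_i$, $\LC$, and $\langle i\rangle$.
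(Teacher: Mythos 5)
Your part~(b) is exactly the paper's argument: with a unique boundary set $B$, part~(a) gives $\lc{B}{Z_i}$ for some $i$, hence $\F \subseteq \LC(Z_i) = \langle i\rangle$, and maximality forces $\F = \langle i\rangle$. Your part~(a) also aims at the paper's idea, but the crucial contradiction is never actually carried out, and the two places where you try to pin it down both miss. At one point the statement you need --- that a left-compressed intersecting family cannot contain a set $F=\{x_i\}_{i\in[k]}$ with $x_i \ge 2i$ for all $i$ --- is simply asserted as known, which is circular; at another it is justified by the claim that ``$F$ and the set obtained by left-compressing $F^{\mathrm{c}}$ would both lie in $\F$ yet be disjoint'', which is false as stated ($F^{\mathrm{c}}$ need not lie in $\F$, and a left-compression of $F^{\mathrm{c}}$ need not be disjoint from $F$). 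The ``delicate point'' you flag --- tracking which individual shifts are legal --- is a red herring: no shifts are needed, only closure under $\lcposet$.

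The clean argument is two lines. If $x_i\ge 2i$ for every $i\in[k]$, then $\lc{\{2,4,\dots,2k\}}{F}$, and also $\lc{\{1,3,\dots,2k-1\}}{\{2,4,\dots,2k\}}$; since $\F$ is left-compressed, both the even set and the odd set lie in $\F$, and they are disjoint, contradicting that $\F$ is intersecting. This works directly in $\binom{[n]}{k}$, so your proposed reduction to $[2k]$ is unnecessary. Once some $i$ has $x_i\le 2i-1$, your conclusion is fine: $\lc{F}{Z_i}$ is \emph{exactly} the condition $x_i\le 2i-1$, because the last $k-i$ elements of $Z_i$ are the $k-i$ largest elements of $[n]$, so the corresponding coordinate inequalities hold automatically.
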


\begin{proof}
For (a), suppose that \(\notlc{F}{Z_i}\) for every \(i \in [k]\). This means that we have \(|F \cap [2i-1]| < i\) for each \(i \in [k]\), and it follows that \(\lc{\{2j:j \in [k]\}}{F}\). Since we also have \(\lc{\{2j-1:j \in [k]\}}{\{2j:j \in [k]\}}\), the fact that \(\F\) is left-compressed implies that \(\{2j:j \in [k]\}, \{2j-1:j \in [k]\} \in \F\), contradicting the intersecting property of \(\F\). 

For (b), suppose that \(\F\) has only one boundary set \(G\). This means that every \(F \in \F\) has \(\lc{F}{G}\). By (a) we may fix \(i \in [k]\) with \(\lc{G}{Z_i}\), and we then have \(\lc{F}{Z_i}\) for every \(F \in \F\), so \(\F \subseteq \langle i \rangle\). By maximality of \(\F\) it follows that \(\F = \langle i \rangle\).
\end{proof}

One important corollary of Proposition~\ref{prop:boundary} is that under any non-trivial weight function there is some canonical family with non-zero weight.

\begin{corollary} \label{non-triv}
    Let \(k \in \mathbb{N}\) and \(n \geq 2k\). If \(\omega : [n] \to \mathbb{R}_{\geq 0}\) is a non-trivial weight function, then there is some \(i \in [k]\) for which the canonical family \(\langle i \rangle\) has \(\omega(\langle i \rangle) > 0\).
\end{corollary}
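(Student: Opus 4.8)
The plan is to chain together non-triviality of $\omega$ with part (a) of Proposition~\ref{prop:boundary}. By definition, $\omega$ being non-trivial means there exists at least one $k$-uniform MLCIF $\F \subseteq \binom{[n]}{k}$ with $\omega(\F) > 0$. Since $\omega(\F) = \sum_{F \in \F} \omega(F)$ and every term $\omega(F) = \prod_{x \in F} \omega(x)$ is non-negative (as $\omega$ takes values in $\mathbb{R}_{\geq 0}$), a sum of non-negative reals being positive forces at least one summand to be positive. Hence I can fix a set $F \in \F$ with $\omega(F) > 0$.

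Next I would invoke Proposition~\ref{prop:boundary}(a) applied to this $\F$: it gives an index $i \in [k]$ with $\lc{F}{Z_i}$. Recalling the identity $\langle i \rangle = \LC(Z_i)$ noted just before Proposition~\ref{prop:boundary}, this says precisely that $F \in \langle i \rangle$. Finally, since $\langle i \rangle$ is a $k$-uniform MLCIF on $[n]$ (using $n \geq 2k$) and all set-weights are non-negative, we get $\omega(\langle i \rangle) \geq \omega(F) > 0$, which is exactly the conclusion.

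There is essentially no obstacle here: the argument is a two-line deduction whose only ingredients are the non-negativity of $\omega$ and the already-established Proposition~\ref{prop:boundary}(a). The one point worth stating cleanly is the elementary observation that $\omega(\F) > 0$ together with non-negativity of each $\omega(F)$ yields some $F \in \F$ with $\omega(F) > 0$; everything else is bookkeeping with the definitions of $\langle i \rangle$, $Z_i$, and $\LC(\cdot)$.
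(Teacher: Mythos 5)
Your argument is correct and matches the paper's proof essentially verbatim: both extract a single set $F \in \F$ with $\omega(F) > 0$ from the non-triviality assumption and then apply Proposition~\ref{prop:boundary}(a) to place $F$ inside some canonical family $\langle i \rangle$. No issues.
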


\begin{proof}
Since \(\omega\) is non-trivial there exists an MLCIF \(\F\) on \([n]\) with \(\omega(\F) > 0\). This implies that some set \(F \in \F\) has \(\omega(F) > 0\). By Proposition~\ref{prop:boundary} there is \(i \in [k]\) for which \(\lc{F}{Z_i}\); it follows that \(F \in \langle i \rangle\), and so \(\omega(\langle i \rangle) > 0\).
\end{proof}

\subsection{The type of an MLCIF}
Let \(\F\) be a \(k\)-uniform MLCIF on \([n]\), where \(n \geq 2k\). For each set \( F \in \F\) we define the index of~\(F\), denoted \( \iota(F) \), to be the smallest~\( i \) such that \( \lc{F}{Z_i} \); note that such an \(i\) must exist by Proposition~\ref{prop:boundary}(a). We say that \(F \in \binom{[n]}{k}\) is a \emph{strong} set in \(\F\) if \(|F \cap [2k+1,n]| = k-\iota(F)\). For example, for \(n > 8\) the \(4\)-uniform MLCIF \(\LC(\{1,2,n-1,n\})\cup\LC(\{2,4,5,n\})\) has \(\{2,4,5,n\}\) as a strong set of index 3. On the other hand, it is straightforward to check that \(\LC(\{1,4,n-1,n\})\cup\LC(\{2,3,4,n\})\) is a \(4\)-uniform MLCIF with no strong set. The next lemma shows that if an MLCIF \(\F\) contains a strong set, then this set has greatest index among all sets in \(\F\).

\begin{lemma} \label{lem:small_generators_are_small}
    Let \(\F\) be a \(k\)-uniform MLCIF on \([n]\), where \(n \geq 2k\). If \(F\) is a strong set in~\(\F\), then every \(G \in \F\) has \(\iota(G) \leq \iota(F)\).
\end{lemma}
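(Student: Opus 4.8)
The plan is to argue by contradiction, by producing two disjoint members of \(\F\). Put \(j \defeq \iota(F)\). We may assume \(j < k\), since otherwise Proposition~\ref{prop:boundary}(a) already gives \(\iota(G) \le k = j\) for every \(G \in \F\). Write \(F = \{x_i\}_{i \in [k]}\). The hypothesis that \(F\) is strong means \(F\) has exactly \(k-j\) elements in \([2k+1,n]\), so \(x_1, \dots, x_j\) are the elements of \(F\) lying in \([2k]\) while \(x_{j+1} \ge 2k+1\); and \(\iota(F) = j\) forces \(|F \cap [2i-1]| \le i-1\), hence \(x_i \ge 2i\), for each \(i < j\), and therefore also \(x_j \ge 2j-1\).

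Since \(x_{j+1} \ge 2k+1 > k+j\), Lemma~\ref{lem:n_expansion_lemma} gives \(\{x_1,\dots,x_j\} \cup [n-k+j+1,n] \in \F\); and as \(2i \le x_i\) for \(i < j\) and \(2j-1 \le x_j\), closure of \(\F\) under left-compressions then yields
\[
    F^\dagger \defeq \{2i : i \in [j-1]\} \cup \{2j-1\} \cup [n-k+j+1,n] \ \in\ \F .
\]
The point of this choice is that the small part of \(F^\dagger\) occupies the even numbers in \([1, 2j-2]\) together with \(2j-1\), which will leave just enough room for the set constructed next.

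Now suppose, for contradiction, that some \(G = \{g_i\}_{i \in [k]} \in \F\) has \(j' \defeq \iota(G) \ge j+1\). As before, \(\iota(G) = j'\) forces \(g_i \ge 2i\) for \(i < j'\), and since \(|G \cap [2j'-1]| \ge j'\) gives \(g_{j'} \le 2j'-1\) while \(g_{j'} > g_{j'-1} \ge 2j'-2\), in fact \(g_{j'} = 2j'-1\). Consider
\[
    G' \defeq \{2i-1 : i \in [j-1]\} \cup [2j,\, k+j],
\]
a \(k\)-element subset of \([1, k+j]\). Since \(n \ge 2k\) we have \(k+j < n-k+j+1\), so \(G'\) avoids \([n-k+j+1,n]\); and \(G'\) plainly avoids \(\{2i : i \in [j-1]\} \cup \{2j-1\}\), so \(G'\) and \(F^\dagger\) are disjoint. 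Writing \(G' = \{a_i\}_{i \in [k]}\) we have \(a_\ell = 2\ell-1\) for \(\ell \le j-1\) and \(a_\ell = j+\ell\) for \(\ell \ge j\); one checks \(a_\ell \le g_\ell\) in each of the ranges \(\ell \le j-1\), \(j \le \ell < j'\), \(\ell = j'\) and \(\ell > j'\), using \(g_\ell \ge 2\ell\) for \(\ell < j'\), the value \(g_{j'} = 2j'-1\), and \(j' \ge j+1\). Hence \(\lc{G'}{G}\), so \(G' \in \F\) by left-compressedness, and \(G', F^\dagger \in \F\) are disjoint --- contradicting the intersecting property. Therefore every \(G \in \F\) has \(\iota(G) \le j\).

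The bulk of the argument is the coordinate-by-coordinate verification that \(\lc{G'}{G}\), which is routine once the four ranges of \(\ell\) are separated. The one step needing genuine care --- and the only place the strongness hypothesis on \(F\) is used --- is the choice of \(F^\dagger\): pushing its small part any further left (say, all the way to \([1,j]\)) would force any compression of \(G\) disjoint from it to start at \(j+1\), and hence require \(g_\ell \ge j+\ell\) already for small \(\ell\), which can fail. The even/odd interleaving of the small parts of \(F^\dagger\) and \(G'\) is exactly what simultaneously keeps them disjoint and allows \(\lc{G'}{G}\).
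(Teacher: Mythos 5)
Your proof is correct and follows essentially the same strategy as the paper: apply Lemma~\ref{lem:n_expansion_lemma} to the strong set \(F\), then left-compress the resulting set and \(G\) down to two explicitly disjoint members of \(\F\) with interleaved small parts, contradicting the intersecting property. The only differences are cosmetic (the roles of the odd and even positions in the two small parts are swapped relative to the paper's choice of \(F''\) and \(G'\)).
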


\begin{proof}
     Suppose for a contradiction that \(F= \{x_i\}_{i \in [k]}\) is a strong set in \(\F\) and that \(G = \{y_i\}_{i \in [k]} \in \F\) has \(\iota(F) = i < j = \iota(G)\). 
By definition of \(\iota(G)\) we then have \(y_\ell \geq 2\ell\) for every \(\ell < j\), and so 
\begin{equation*}
        G' \defeq \lc{\{2,4,\dots,2(i-1),2i,2i+1,\dots,i+k\}}{G}.
\end{equation*}
Similarly, by definition of \(\iota(F)\) we have \(x_\ell \geq 2\ell\) for every \(\ell < i\), and it follows that \(x_i \geq 2i-1\). We also have \(F' \defeq \{x_1, \dots, x_{i}\} \cup [n-k+i+1, n] \in \F\) by Lemma~\ref{lem:n_expansion_lemma} since~\(F\) is a strong set, and so 
\begin{equation*}
        F'' \defeq \lc{\{1,3,\dots,2i-3,2i-1\} \cup [n-k+i+1, n]}{F'}.
\end{equation*}
Since \(G, F' \in \F\) and \(\F\) is left-compressed it follows that \(G'\) and \(F''\) are both in \(\F\), which contradicts the fact that \(\F\) is intersecting.
\end{proof}

Lemma~\ref{lem:small_generators_are_small} immediately yields the following corollary.

\begin{corollary} \label{cor:strong_generators_all_same}
    Let \(n \geq 2k\). If \(F\), \(F'\) are strong sets in a \(k\)-uniform MLCIF on \([n]\), then 
    \(\iota(F) = \iota(F')\).
\end{corollary}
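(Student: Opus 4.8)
The plan is to derive the corollary directly from Lemma~\ref{lem:small_generators_are_small} by applying it symmetrically to the two strong sets. First I would fix a $k$-uniform MLCIF $\F$ on $[n]$ (with $n \geq 2k$) containing strong sets $F$ and $F'$. Applying Lemma~\ref{lem:small_generators_are_small} with $F$ playing the role of the strong set and taking $G = F'$ gives $\iota(F') \leq \iota(F)$. Then I would apply the same lemma again with the roles reversed, taking $F'$ as the strong set and $G = F$, to obtain $\iota(F) \leq \iota(F')$. Combining the two inequalities yields $\iota(F) = \iota(F')$, which is exactly the assertion of the corollary.

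There is essentially no obstacle here: all of the work is already done in Lemma~\ref{lem:small_generators_are_small}, and the corollary is merely the observation that the hypothesis of that lemma can be satisfied by either of the two strong sets while the other is used as the generic member $G \in \F$. The only points to verify are that both $F$ and $F'$ genuinely meet the definition of a strong set in $\F$ and both lie in $\F$, but these hold by assumption, so a single sentence of proof suffices.
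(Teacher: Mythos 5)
Your proposal is correct and is exactly the argument the paper intends: the paper simply states that the corollary follows immediately from Lemma~\ref{lem:small_generators_are_small}, and the symmetric double application you describe is the standard way to make that explicit.
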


For each MLCIF \(\F\) which contains at least one strong set, we define the \emph{type} \(\iota(\F)\) of \(\F\) to be the common index \(\iota(F)\) of each strong set in \(\F\). This signifies that \(\F\) is ``close'' to the canonical MLCIF \(\langle \iota(\F) \rangle\) in a certain sense which will be useful in the proof of Theorem~\ref{thm:main_increasing}. If \(\F\) does not contain a strong set then we say that \(\F\) is \emph{typeless}.

\section{MLCIFs under increasing weight functions} \label{section:weight functions}

We begin our proof of Theorem~\ref{thm:main_increasing} with the following theorem.

\begin{theorem} \label{thm:second_increasing}
    Let \(k \in \mathbb{N}\), set \(c_k \defeq 1/\binom{2k}{k}\) and fix \(n \geq 3k^3\binom{2k}{k}\). Also let \(\F\) be a \(k\)-uniform MLCIF on \([n]\) which is not canonical, and \(\omega : [n] \to \mathbb{R}_{\geq 0}\) be a non-trivial increasing weight function. If \(\F\) is typeless, then
    \[
        \omega(\F) < \sum_{j \in [k]} \frac{ \omega(\langle j \rangle)}{k}.
    \]
    Otherwise,
    \[
        \omega(\F) < (1-c_k) \cdot \omega(\langle \iota(\F) \rangle) + c_k\sum_{j \in [k]} \frac{ \omega(\langle j \rangle)}{k}.
    \]
\end{theorem}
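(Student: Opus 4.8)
The plan is to decompose \(\F\) according to the indices of its sets and compare, layer by layer, with the canonical families. Using Proposition~\ref{prop:boundary}(a) we may partition \(\F=\bigsqcup_{i\in[k]}\F_i\), where \(\F_i=\{F\in\F:\iota(F)=i\}\); note \(\F_i\subseteq\langle i\rangle\) since \(\iota(F)=i\) forces \(F\leq_{\text{LC}}Z_i\). When \(\F\) has type \(t\), Lemma~\ref{lem:small_generators_are_small} gives \(\F_i=\emptyset\) for \(i>t\), so \(\F=\bigsqcup_{i\le t}\F_i\) with \(\F_t\subseteq\langle t\rangle\); moreover \(\F\neq\langle t\rangle\) (as \(\F\) is not canonical), so \(Z_t\notin\F\), since \(Z_t\in\F\) would force \(\LC(Z_t)=\langle t\rangle\subseteq\F\) and hence \(\F=\langle t\rangle\) by maximality. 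Thus \(Z_t\in\langle t\rangle\setminus\F\), and because \(Z_t\) is the \(\leq_{\text{LC}}\)-maximum of \(\langle t\rangle\) and \(\omega\) is increasing, \(\omega(Z_t)=\max_{F\in\langle t\rangle}\omega(F)\). Writing \(\omega(\F)=\omega(\langle t\rangle)+\omega(\F\setminus\langle t\rangle)-\omega(\langle t\rangle\setminus\F)\), the typed bound is equivalent to
\[
    \omega(\langle t\rangle\setminus\F)-\omega(\F\setminus\langle t\rangle)>c_k\Bigl(\omega(\langle t\rangle)-\tfrac1k\textstyle\sum_{j\in[k]}\omega(\langle j\rangle)\Bigr),
\]
so the task is to show that the ``excess'' of \(\F\) over \(\langle t\rangle\) (its sets of index \(<t\), which lie outside \(\langle t\rangle\)) is outweighed by a substantial missing portion of \(\langle t\rangle\). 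Note also that if \(\omega(\langle t\rangle)\le\frac1k\sum_j\omega(\langle j\rangle)\) the right-hand side is \(\le 0\) and the inequality is immediate, so we may assume \(\langle t\rangle\) is a heavier-than-average canonical family.

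The structural heart is to exploit that \(\F\) is maximal, intersecting, left-compressed and contains a strong set \(F^{*}=\{x^{*}_i\}_{i\in[k]}\) of index \(t\). By Lemma~\ref{lem:n_expansion_lemma} the lift \(G^{*}:=\{x^{*}_1,\dots,x^{*}_t\}\cup[n-k+t+1,n]\) also lies in \(\F\). For each \(F\in\F\setminus\langle t\rangle\), so \(\iota(F)=i<t\), the fact that every left-compression of \(F\) must intersect every left-compression of \(G^{*}\) severely restricts \(F\): pushing \(F\) downwards while keeping it disjoint from a suitable left-compression of \(G^{*}\) must fail, which forces \(F\) to retain enough small elements, and in turn forces a whole down-set of \(\langle t\rangle\) built from those small elements to be absent from \(\F\). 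I would formalise this as a weight-monotone injection \(\phi:\F\setminus\langle t\rangle\to\langle t\rangle\setminus\F\), leaving at least one uncharged witness set in \(\langle t\rangle\setminus\F\) (for instance \(Z_t\) itself, or a heavy neighbour of it), so that \(\omega(\langle t\rangle\setminus\F)\ge\omega(\F\setminus\langle t\rangle)+\omega(Z_t)\); it then remains to verify \(\omega(Z_t)>c_k\bigl(\omega(\langle t\rangle)-\frac1k\sum_j\omega(\langle j\rangle)\bigr)\). This is where the hypothesis \(n\ge 3k^3\binom{2k}{k}\) enters: it lets us bound the number of sets sharing each index (producing the multiplicity \(\binom{2k}{k}=1/c_k\)) and control the relative sizes of the canonical families, so that \(\omega(\langle t\rangle)\) cannot exceed \(\binom{2k}{k}\,\omega(Z_t)\) after discarding the light tail; non-triviality together with Corollary~\ref{non-triv} guarantees \(\omega\) does not degenerate so as to make \(\omega(Z_t)=0\) while \(\omega(\langle t\rangle)>0\).

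For the typeless case there is no dominant canonical family, and the absence of strong sets means every \(F\in\F\) has \(|F\cap[1,2k]|\ge\iota(F)+1\); since \(\omega\) is increasing, such sets are ``light'' in the sense that replacing a top-range element by a \([1,2k]\)-element cannot increase the weight. I would run the same index decomposition but now charge \(\F\) simultaneously into all \(k\) canonical families: for each \(j\in[k]\) build a weight-monotone map \(\F\to\langle j\rangle\) whose images across different \(j\) are spread out enough that summing over \(j\) yields \(k\,\omega(\F)\le\sum_{j\in[k]}\omega(\langle j\rangle)\), with strictness because at least one canonical family is genuinely undersold --- its \(\leq_{\text{LC}}\)-maximum \(Z_j\) (a heaviest set) is never an image, as \(Z_j\in\F\) would make \(\F=\langle j\rangle\) canonical. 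The factor \(1/k\) is then exactly this averaging over the \(k\) canonical targets.

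I expect the main obstacle to be the construction of the charging maps --- in particular, showing uniformly in \(\omega\) that a set of index \(i<t\) appearing in a type-\(t\) MLCIF genuinely ``costs'' a comparably heavy missing piece of \(\langle t\rangle\), and that the images in the typeless case are disjoint enough --- together with pinning down the precise constants \(c_k=1/\binom{2k}{k}\) and the \(1/k\) factor, which is where the size estimates afforded by \(n\ge 3k^3\binom{2k}{k}\) and a careful count of how many sets share each index will be needed.
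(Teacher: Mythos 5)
Your write-up is a plan rather than a proof: the two central constructions (the ``weight-monotone injection'' \(\phi:\F\setminus\langle t\rangle\to\langle t\rangle\setminus\F\) in the typed case, and the simultaneous charging maps \(\F\to\langle j\rangle\) in the typeless case) are exactly the steps you defer, and both, as specified, aim at targets that are either unachievable or quantitatively insufficient. In the typed case, reserving the single set \(Z_t\) as the uncharged witness only yields \(\omega(\F)\leq\omega(\langle t\rangle)-\omega(Z_t)\), and your remaining verification \(\omega(Z_t)>c_k\bigl(\omega(\langle t\rangle)-\tfrac1k\sum_j\omega(\langle j\rangle)\bigr)\) is false in general: take \(\omega=\omega_t\) from Lemma~\ref{lem:canonical_families_unique}, so \(\omega(Z_t)=1\) while \(\omega(\langle t\rangle)\geq\binom{n-2k}{k-t}\) and \(\sum_{j}\omega(\langle j\rangle)/k\) is only marginally larger than \(\omega(\langle t\rangle)/k\); then the right-hand side is of order \(\binom{n-2k}{k-t}/\binom{2k}{k}\), which dwarfs \(1\) for \(n\geq 3k^3\binom{2k}{k}\). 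The deficit of \(\F\) inside \(\langle t\rangle\) must be an entire slab \(\X_{[t,2t-1]}=\{[t,2t-1]\cup G: G\in\binom{[2k+1,n]}{k-t}\}\) --- a \(c_k\)-fraction of \(\langle t\rangle\) --- not one set. This is where the strong/non-strong dichotomy, absent from your decomposition, is essential: the paper shows every strong set lies in some \(\X_J\) with \(J\in\binom{[2t-1]}{t}\), that \(J=[t,2t-1]\) is excluded (else \(\F=\langle t\rangle\)), and that each \(\omega(\X_J)\leq\omega(\X_{[t,2t-1]})\) since \(\omega\) is increasing; comparing at most \(\binom{2t-1}{t}-1\) occupied slabs against the one missing slab gives \(\omega(\mathcal{S})\leq(1-c_k)\omega(\langle t\rangle)\). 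Also note a small slip: sets of index \(i<t\) need not lie outside \(\langle t\rangle\).

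In the typeless case your mechanism also cannot work as stated: there is no injection of \(\F\) into \(\langle k\rangle\) at all, since \(|\langle k\rangle|=\binom{2k-1}{k}\) is bounded independently of \(n\) while a typeless MLCIF (e.g.\ the paper's \(\LC(\{1,4,n-1,n\})\cup\LC(\{2,3,4,n\})\)) has size growing with \(n\); so ``\(k\) weight-monotone maps into the \(k\) canonical families'' cannot yield \(k\,\omega(\F)\leq\sum_j\omega(\langle j\rangle)\). What actually drives both cases is the element-swapping count you gesture at but do not carry out: every non-strong set of index \(j\) has an extra element in \([2k]\), hence lies in \(\LC(T_j\cup\{2k\})\) with \(T_j=[j,2j-1]\cup[n-k+j+2,n]\), and replacing that \([2k]\)-element by any of the \(\geq n-3k\) available elements of \([2k+1,n]\) does not decrease the weight; counting multiplicities gives \(\omega(\W_j)\leq\frac{2k^2}{n-3k}\omega(\langle j\rangle)<\frac{c_k}{k}\omega(\langle j\rangle)\), which is where the hypothesis \(n\geq3k^3\binom{2k}{k}\) enters. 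Summing over \(j\) (with strictness supplied by Corollary~\ref{non-triv}) and adding the strong-set bound finishes the proof; without these two counts your outline does not close.
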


\begin{proof}    
    Let \(\mathcal{S} \defeq \{F \in \F : F \textnormal{ is strong}\}\) and for each \(j \in [k]\) let \(\mathcal{W}_j \defeq \{F \in \F \setminus \mathcal{S} : \iota(F) = j\}\). So \(\mathcal{S}\) consists of all strong sets in \(\F\), and \(\mathcal{W}_j\) consists of all sets in \(\F\) of index~\(j\) which are not strong.    
    We then have
    \(\F = \mathcal{S} \cup \bigcup_{j \in [k]} \mathcal{W}_j\), and so 
    \begin{equation}\label{eq0}
        \omega(\F) \leq \omega(\mathcal{S}) + \sum_{j \in [k]} \omega(\mathcal{W}_j).
    \end{equation}
    
    If \(\F\) is typeless (has no strong sets) then \(\mathcal{S} = \emptyset\), and in this case we set \(\iota \defeq k\). Otherwise write \(\iota \defeq \iota(\F)\) to denote the type of \(\F\). Also,   
        for every set \(J \subseteq [n]\) with \(|J| \leq k\) define \(\X_J \defeq \{J \cup G : G \in \binom{[2k+1,n]}{k-|J|}\}\).
    For each \(F \in \mathcal{S}\) we have \(\iota(F) = \iota\) by definition of \(\iota(\F)\), meaning that \(|F \cap [2\iota-1]| \geq \iota\). Together with the fact that \(F\) is strong this implies that \(F \in \X_J\) for some \(J \in \binom{[2\iota - 1]}{\iota}\). Moreover, if \(F \in \X_J\) for \(J = [\iota, 2\iota -1]\), then by Lemma~\ref{lem:n_expansion_lemma} we have \(Z_\iota \in \F\) (recall that \(Z_\iota \defeq [\iota,2\iota -1] \cup [n-k+\iota + 1,n]\) is the unique boundary set of \(\langle \iota \rangle\)). It follows that \(\langle \iota \rangle \subseteq \F\) and therefore \(\F = \langle \iota \rangle\), contradicting the fact that \(\F\) is not a canonical MLCIF. We conclude that  
    \begin{equation} \label{eqS}
    \mathcal{S} \subseteq \left\{ \X_J : F \in \binom{[2\iota - 1]}{\iota} \setminus [\iota, 2\iota - 1] \right\}.
    \end{equation}
    Since each element of \(\X_{[\iota, 2\iota -1]}\) is in \(\langle \iota \rangle\), it follows that 
    \begin{equation} \label{eqX}
        \X_{[\iota, 2\iota -1]} \subseteq \langle \iota \rangle \setminus \mathcal{S}.
    \end{equation}
    We then have
    \begin{align*} 
    \omega(\mathcal{S}) 
        &\leq \sum_{\substack{J \in \binom{[2\iota -1]}{\iota}\\ J  \neq[\iota,2\iota-1]}}\omega(\X_J) 
        \leq \left(\binom{2\iota-1}{\iota} -1 \right) \cdot \omega (\X_{[\iota,2\iota-1]}) 
        \\& \leq \left(\binom{2k}{k}-1\right) \cdot \Big(\omega(\langle\iota\rangle) - \omega(\mathcal{S})\Big)  = \left(\frac{1}{c_k} - 1 \right) \Big(\omega(\langle\iota\rangle) - \omega(\mathcal{S})\Big),
    \end{align*}
    where the first inequality follows from~\eqref{eqS}, the second inequality holds since \(\omega\) is increasing and the third inequality follows from~\eqref{eqX} and the fact that \(\mathcal{S} \subseteq \langle \iota \rangle\).
    By rearranging, we obtain 
    \begin{equation} \label{eq1} \omega(\mathcal{S}) \leq (1-c_k) \cdot \omega(\langle \iota \rangle).
    \end{equation}

    Now consider a set \(F \in \W_j \) for some \(j \in [k]\).
    By definition of \(\W_j\) we know that \(F\) is not strong, 
    so \(|F \cap [2k]| \geq j +1\), and also that \(\iota(F) = j\), so \(F \in \langle j \rangle\). The latter implies that \(|F \cap [2j-1]| \geq j\), so \(\W_j \subseteq \LC(T_j\cup\{2k\})\), where \(T_j = [j,2j-1] \cup[n-k+j+2,n]\).
    It follows that
    \begin{align}
        \omega(\W_j) &\leq \nonumber
        \omega(\LC(T_j \cup \{2k\})) \leq \sum_{\substack{\lc{A}{T_j} \\ x \in [2k] \setminus A}} \omega(A \cup \{x\}) \\& \leq \frac{2k}{n-3k} \sum_{\substack{\lc{A}{T_j} \\ x \in [2k+1,n]\setminus A}} \omega(A \cup \{x\}) \leq \frac{2k^2}{n-3k} \omega(\langle j \rangle ).\label{eq2}
    \end{align}
    Indeed, the second inequality holds because each set in \(\LC(T_j \cup \{2k\})\) can be written as \(A \cup \{x\}\) with \(\lc{A}{T_j}\) and \(x \in [2k] \setminus A\). To see that the third inequality holds, observe that for each set \(\lc{A}{T_j}\) we have \(\sum_{x \in [2k] \setminus A} \omega(A \cup \{x\}) \leq \frac{2k}{n-3k} \sum_{x \in [2k+1,n]\setminus A} \omega(A \cup \{x\})\), because each of the at most \(2k\) terms in the left hand sum is less than or equal to each of the at least \(n-3k\) terms in the right hand sum. Finally, the concluding inequality holds because for each \(\lc{A}{T_j}\) and \(x \in [2k+1,n]\setminus A\) we have \(A \cup \{x\} \in \langle j \rangle\), and moreover each set in \(\langle j \rangle\) can be written in this form in at most \(k\) distinct ways.
    
    Since \(c_k = 1/\binom{2k}{k}\) and \(n \geq 3k^3\binom{2k}{k}\) we have \(n - 3k > 2k^3/c_k\) (note for this that we may assume \(k \geq 2\) since for \(k=1\) the canonical family \(\langle 1 \rangle\) is the only MLCIF so the theorem holds vacuously). So \(\frac{c_k}{k} > \frac{2k^2}{n-3k}\), and it follows by~\eqref{eq2} that in all cases we have \(\omega(\W_j) \leq \frac{c_k}{k} \omega(\langle j \rangle)\), and if \(\omega(\langle j \rangle) > 0\) then \(\omega(\W_j) < \frac{c_k}{k}\omega(\langle j \rangle)\). Since \(\omega\) is non-trivial Corollary~\ref{non-triv} implies that \(\omega(\langle j \rangle) > 0\) for some \(j \in [k]\), and so \(\sum_{j \in [k]} \omega(\W_j) < \frac{c_k}{k} \sum_{j \in [k]} \omega(\langle j \rangle)\). Together with~\eqref{eq0} and~\eqref{eq1} this gives   
    \[
        \omega(\F) \leq \omega(\mathcal{S}) + \sum_{j \in [k]} \omega(\W_j) < (1-c_k) \cdot \omega(\langle \iota\rangle) + \frac{c_k}{k} \sum_{j \in [k]} \omega(\langle j \rangle),
    \]
    giving the second inequality of the theorem statement. For the first inequality, recall that if \(\F\) is typeless then \(\mathcal{S} = \emptyset\) so \(\omega(\mathcal{S}) = 0\). Since \(c_k \leq 1\) it follows that 
       \[
        \omega(\F) \leq \sum_{j \in [k]} \omega(\W_j) < \frac{c_k}{k} \sum_{j \in [k]} \omega(\langle j \rangle) \leq \frac{1}{k} \sum_{j \in [k]} \omega(\langle j \rangle). \qedhere
    \]     
\end{proof}

Observe that in each case the upper bound on \(\omega(\F)\) given by Theorem~\ref{thm:second_increasing} is a biased average of the weights \(\omega(\langle i \rangle)\) for \(i \in [k]\). This immediately gives the following corollary, that at least one canonical MLCIF has greater weight than \(\F\).

\begin{corollary}\label{cor:not_unique}
    Let \(n, k \in \mathbb{N}\) have \(n \geq 3k^3\binom{2k}{k}\), and let \(\omega : [n] \to \mathbb{R}_{\geq 0}\) be a non-trivial increasing weight function. If \(\F\) is a \(k\)-uniform MLCIF on \([n]\) which is not canonical, then 
    \(
        \omega(\F) < \omega(\langle i \rangle)
    \)
    for some \(i \in [k] \).
\end{corollary}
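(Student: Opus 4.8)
The plan is to deduce Corollary~\ref{cor:not_unique} directly from Theorem~\ref{thm:second_increasing} by the observation that a strict convex-combination bound on $\omega(\F)$ forces $\omega(\F)$ to be strictly below at least one of the terms being averaged. First I would treat the typeless case: Theorem~\ref{thm:second_increasing} gives $\omega(\F) < \frac{1}{k}\sum_{j\in[k]}\omega(\langle j\rangle)$, which is the plain average of the $k$ quantities $\omega(\langle 1\rangle),\dots,\omega(\langle k\rangle)$. Since a number strictly less than the average of finitely many reals is strictly less than the maximum of those reals, there is some $i\in[k]$ with $\omega(\F) < \omega(\langle i\rangle)$, as required.

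Next I would handle the case where $\F$ has a type $\iota \defeq \iota(\F)$. Here Theorem~\ref{thm:second_increasing} gives
\[
    \omega(\F) < (1-c_k)\cdot\omega(\langle\iota\rangle) + c_k\sum_{j\in[k]}\frac{\omega(\langle j\rangle)}{k}.
\]
The coefficients $1-c_k$ (on $\omega(\langle\iota\rangle)$) together with $c_k/k$ on each $\omega(\langle j\rangle)$ are non-negative — recall $c_k = 1/\binom{2k}{k} \in (0,1]$ — and they sum to $(1-c_k) + c_k = 1$, so the right-hand side is again a convex combination of $\omega(\langle 1\rangle),\dots,\omega(\langle k\rangle)$ (with the weight of $\langle\iota\rangle$ being $1-c_k+c_k/k$). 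A convex combination of finitely many reals is at most their maximum, so $\omega(\F)$ is strictly less than $\max_{i\in[k]}\omega(\langle i\rangle)$, and again there is some $i\in[k]$ with $\omega(\F)<\omega(\langle i\rangle)$.

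Since every non-canonical MLCIF $\F$ is either typeless or has a type, these two cases are exhaustive, and the corollary follows. There is essentially no obstacle here: the work has already been done in Theorem~\ref{thm:second_increasing}, and all that remains is the elementary fact that a real number strictly below a convex average of a finite list of reals is strictly below the largest entry of that list. The only minor point to state carefully is that the coefficients in the type case are genuinely a probability distribution on $[k]$ (non-negativity of $1-c_k$ needs $c_k \leq 1$, which holds since $\binom{2k}{k}\geq 1$), so that the ``convex combination $\leq$ maximum'' principle applies.
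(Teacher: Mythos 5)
Your argument is correct and is exactly the paper's reasoning: the paper derives this corollary immediately from Theorem~\ref{thm:second_increasing} by observing that each upper bound is a biased average (convex combination) of the weights \(\omega(\langle 1\rangle),\dots,\omega(\langle k\rangle)\), so \(\omega(\F)\) must fall strictly below the largest of them. Your only addition is to spell out the non-negativity and summing-to-one of the coefficients, which the paper leaves implicit.
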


Corollary~\ref{cor:not_unique} gives the first part of Theorem~\ref{thm:main_increasing}, that only canonical MLCIFs can be optimal for an increasing weight function. The following lemma gives the second statement: for every canonical MLCIF \(\F\) there exists an increasing weight function for which \(\F\) is uniquely optimal.

\begin{lemma} \label{lem:canonical_families_unique}
    Let \(n, k \in \mathbb{N}\) have \(n \geq 3k^3\binom{2k}{k}\). For each \(i \in [k]\) the canonical MLCIF \(\langle i \rangle\) is uniquely optimal for the increasing weight function \(\omega_{i}: [n] \rightarrow \mathbb{R}_{\geq 0} \) given by 
    \[
        \omega_{i}(j) = \twopartdefother{0}{j < i,}{1}
    \]
\end{lemma}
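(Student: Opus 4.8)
The plan is to deduce the lemma from Corollary~\ref{cor:not_unique} together with a direct comparison of the $k$ canonical MLCIFs under $\omega_i$. Two easy facts come first. The function $\omega_i$ is increasing by construction, and it is non-trivial: the unique boundary set $Z_i=[i,2i-1]\cup[n-k+i+1,n]$ of $\langle i\rangle$ consists only of elements that are at least $i$, so $\omega_i(Z_i)=1$ and hence $\omega_i(\langle i\rangle)\geq 1>0$. Moreover, for any $F\in\binom{[n]}{k}$ we have $\omega_i(F)=1$ if $F\cap[i-1]=\emptyset$ and $\omega_i(F)=0$ otherwise, so for every family $\G\subseteq\binom{[n]}{k}$ the quantity $\omega_i(\G)$ equals the number of sets in $\G$ that avoid $[i-1]$.

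The heart of the matter is the claim that $\omega_i(\langle j\rangle)\leq\omega_i(\langle i\rangle)$ for every $j\in[k]$, with strict inequality whenever $j\neq i$. When $j<i$ this is immediate: every set of $\langle j\rangle=\LC(Z_j)$ is a left-compression of $Z_j$, hence has least element at most $j<i$, so no set of $\langle j\rangle$ avoids $[i-1]$ and $\omega_i(\langle j\rangle)=0<\omega_i(\langle i\rangle)$. When $j=i$ there is nothing to prove. When $i<j\leq k$ (so $i\leq k-1$), a set of $\langle i\rangle$ avoiding $[i-1]$ must contain $[i,2i-1]$, so $\omega_i(\langle i\rangle)=\binom{n-2i+1}{k-i}$; and a set of $\langle j\rangle$ avoiding $[i-1]$ must meet $[i,2j-1]$ in at least $j$ points, so, classifying such sets by the number $s$ of their elements inside $[i,2j-1]$, we get $\omega_i(\langle j\rangle)=\sum_{s=j}^{\min(2j-i,k)}\binom{2j-i}{s}\binom{n-2j+1}{k-s}$. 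I would bound each $\binom{2j-i}{s}$ by $\binom{2k}{k}$, each $\binom{n-2j+1}{k-s}$ with $s\geq j$ by $\binom{n-2j+1}{k-j}$, and the number of summands by $k$ (all legitimate since $n-2j+1$ dwarfs $2k$), and then use that $\binom{n-2j+1}{k-j}$ is non-increasing in $j$ to obtain $\omega_i(\langle j\rangle)\leq k\binom{2k}{k}\binom{n-2i-1}{k-i-1}$. It then suffices to check $k\binom{2k}{k}\binom{n-2i-1}{k-i-1}<\binom{n-2i+1}{k-i}$, which follows from $n\geq 3k^3\binom{2k}{k}$: the ratio of these two binomials is at least $(n-k-i+2)/(k-i)>(n-2k)/k\geq k\binom{2k}{k}$ in this range (this case is vacuous when $k=1$).

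To conclude, let $\F$ be a $k$-uniform MLCIF on $[n]$ with $\F\neq\langle i\rangle$. If $\F$ is not canonical then, since $\omega_i$ is non-trivial and increasing, Corollary~\ref{cor:not_unique} gives $\omega_i(\F)<\omega_i(\langle j\rangle)$ for some $j\in[k]$, and the claim gives $\omega_i(\langle j\rangle)\leq\omega_i(\langle i\rangle)$. If instead $\F=\langle j\rangle$ is canonical with $j\neq i$, the claim gives $\omega_i(\F)<\omega_i(\langle i\rangle)$ directly. In either case $\omega_i(\F)<\omega_i(\langle i\rangle)$, so $\langle i\rangle$ is uniquely optimal for $\omega_i$.

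The main obstacle is the case $i<j\leq k$ of the claim: one must bound the sum defining $\omega_i(\langle j\rangle)$ and show it is dominated by $\binom{n-2i+1}{k-i}$ under the stated bound on $n$. The rest is bookkeeping, but this is precisely where the hypothesis $n\geq 3k^3\binom{2k}{k}$ is used, so the binomial estimates must be kept reasonably tight.
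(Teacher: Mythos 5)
Your proposal is correct and follows essentially the same route as the paper: reduce to comparing canonical families via Corollary~\ref{cor:not_unique}, note that \(\omega_i(\langle j\rangle)=0\) for \(j<i\), and for \(j>i\) beat a bound of the form \(k\binom{2k}{k}\binom{n-O(k)}{k-i-1}\) against \(\omega_i(\langle i\rangle)\geq\binom{n-O(k)}{k-i}\) using \(n\geq 3k^3\binom{2k}{k}\). The only (immaterial) difference is that you count exactly the sets of \(\langle j\rangle\) avoiding \([i-1]\), whereas the paper simply bounds \(\omega_i(\langle j\rangle)\) by \(|\langle j\rangle|\) stratified by \(|F\cap[2k]|\).
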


\begin{proof}
    Write \(\omega \defeq \omega_{i}\). Since \(\omega\) is increasing, Corollary~\ref{cor:not_unique} tells us that no non-canonical MLCIF is optimal for \(\omega\), so it suffices to show that \(\omega(\langle i \rangle) > \omega(\langle j \rangle) \) for all \(j \neq i\). Observe first that a set \(F \in \binom{[n]}{k}\) has \(\omega(F) = 1\) if \(|F \cap [i-1]| = \emptyset\) and \(\omega(F) = 0\) otherwise, so \(\omega(\F)\) counts the number of sets in the family \(\F\) that do not intersect \([i-1]\). It follows that for each \(j < i\) we have \(\omega(\langle j \rangle) = 0\). On the other hand, for each \(j > i\) we have 
    \begin{align*}
    \omega(\langle j \rangle) 
        & \leq  |\langle j \rangle | \leq  \sum_{\ell = i+1}^{k} \binom{2k}{\ell} \binom{n-2k}{k-\ell} \leq k \cdot \binom{2k}{k}\cdot \binom{n-2k}{k-(i+1)} \\
        & = k \binom{2k}{k} \frac{k-i}{n-3k+i+1}\binom{n-2k}{k-i} \leq \frac{k^2}{n/2} \binom{2k}{k} \omega (\langle i \rangle) < \omega (\langle i \rangle),
    \end{align*}
    as required. Here the second inequality follows from the fact that each set in \(\langle j \rangle\) contains at least \(j \geq i+1\) elements from \([2k]\) and the third inequality from the fact that \(\binom{n-2k}{k-\ell}\) decreases as \(\ell\) increases. The penultimate inequality uses the fact that for every \(G \in \binom{[2k+1, n]}{k-i}\) the set \(F \defeq [i, 2i-1] \cup G\) is an element of \(\langle i \rangle\) with \(\omega(F) = 1\), so \(\omega(\langle i \rangle) \geq \binom{n-2k}{k-i}\), and both this and the final inequality use our lower bound on \(n\).
\end{proof}

Combining Corollary~\ref{cor:not_unique} and Lemma~\ref{lem:canonical_families_unique} establishes Theorem~\ref{thm:main_increasing}.

\section{The number of MLCIFs} \label{section: double exponential}

Recall that \(\M{k}\) denotes the collection of all \(k\)-uniform MLCIFs on \([2k]\). Our goal in this section is to prove Theorem~\ref{thm:main_double-exp}, showing that \(|\mathcal{M}_k|\) grows as a double exponential in \(k\). 
We begin by showing that, 
as one might expect, \(|\M{k}|\) increases with \(k\). Both here and later we use the key observation that a family \(\F \subseteq \binom{[2k]}{k}\) is intersecting if and only if there is no \(F \in \F\) with \(F^c \in \F\). Consequently, if \(\F \subseteq \binom{[2k]}{k}\) is an MLCIF then for each \(F \in \binom{[2k]}{k}\) with \(F \notin \F\) we have \(F^c \in \F\), as otherwise we could extend \(\F\) to a larger left-compressed intersecting family by adding a set \(F \notin \F\) which is \(\leq_{\mathrm{LC}}\)-minimal among all sets \(F \notin \F\) with \(F^c \notin \F\).

\begin{lemma} \label{lem:noofIFsGrow}
   For all \(k \in \mathbb{N}\) we have \(|\M{k}| \leq |\M{k+1}|\).
\end{lemma}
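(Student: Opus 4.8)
The plan is to build an explicit injection $\Phi \colon \M{k} \to \M{k+1}$; since a member of $\M{k}$ is a $k$-uniform MLCIF on $[2k]$ and a member of $\M{k+1}$ is a $(k+1)$-uniform MLCIF on $[2k+2]$, the map $\Phi$ must turn an MLCIF on $[2k]$ into one on $[2k+2]$. The crucial idea is to adjoin the two new ground-set elements \emph{at the bottom}. Writing $g(i) \defeq i + 2$ for the order isomorphism from $[2k]$ onto $\{3, 4, \dots, 2k+2\}$ (extended to sets in the obvious way), I would set, for each $\F \in \M{k}$,
\[
    \Phi(\F) \defeq \Big\{ G \in \binom{[2k+2]}{k+1} : \{1,2\} \subseteq G \Big\} \cup \big\{ \{1\} \cup g(F) : F \in \F \big\} \cup \big\{ \{2\} \cup g(F) : F \in \F \big\}.
\]
So $\Phi(\F)$ contains every $(k+1)$-set meeting $\{1,2\}$ in two points, no $(k+1)$-set meeting $\{1,2\}$ in no point, and among the $(k+1)$-sets meeting $\{1,2\}$ in exactly one point precisely the $g$-images of members of $\F$, each with a $1$ or a $2$ prepended.

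First I would verify that $\Phi(\F)$ is a maximal intersecting family. Intersecting follows by splitting into cases according to how two sets of $\Phi(\F)$ meet $\{1,2\}$: two sets containing $1$ share $1$; two containing $2$ share $2$; and $\{1\} \cup g(F_1)$ meets $\{2\} \cup g(F_2)$ because $F_1 \cap F_2 \neq \emptyset$ as $\F$ is intersecting. For maximality I would use the observation already exploited in the paper: on a ground set $[2m]$ two disjoint $m$-sets are complementary, so an $m$-uniform intersecting family on $[2m]$ is maximal precisely when it picks one set from each complementary pair. The complementary pairs in $\binom{[2k+2]}{k+1}$ are of two kinds — those $\{G, G^{\text c}\}$ with $\{1,2\} \subseteq G$ (where $\Phi(\F)$ contains exactly $G$), and those $\{G, G^{\text c}\}$ with $1 \in G$, $2 \notin G$, which one writes as $G = \{1\} \cup g(F)$ and $G^{\text c} = \{2\} \cup g(F^{\text c})$ for some $F \in \binom{[2k]}{k}$ (where $\Phi(\F)$ contains exactly one, since the MLCIF $\F$ contains exactly one of $F, F^{\text c}$).

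The main step, and the one I expect to need the most care, is that $\Phi(\F)$ is left-compressed. Given $G \in \Phi(\F)$ and $G' \in \binom{[2k+2]}{k+1}$ with $\lc{G'}{G}$, one argues by cases on $|G \cap \{1,2\}|$, using that $\lc{G'}{G}$ forces the $i$-th smallest element of $G'$ to be at most the $i$-th smallest of $G$ and that $1, 2$ are the two smallest elements of $[2k+2]$. If $\{1,2\} \subseteq G$ then $\{1,2\} \subseteq G'$, so $G' \in \Phi(\F)$. If $G = \{a\} \cup g(F)$ with $a \in \{1,2\}$ and $F \in \F$, then $G' \cap \{1,2\} \neq \emptyset$, and either $\{1,2\} \subseteq G'$ (done) or $G' = \{b\} \cup g(F'')$ for some $b \in \{1,2\}$ and $F'' \subseteq [2k]$; in that case the top $k$ coordinates of $G'$ are dominated by those of $G$, so $\lc{F''}{F}$ (as $g$ and translation by $2$ preserve $\leq_{\mathrm{LC}}$), whence $F'' \in \F$ by left-compressedness of $\F$ and so $G' \in \Phi(\F)$. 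This is exactly where placing the new elements at the bottom is essential: if one instead adjoins them at the top, the resulting family is still intersecting and maximal but in general fails to be left-compressed, because a top-adjoined set containing both new elements can be left-compressed to a copy of a set outside $\F$.

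Having shown $\Phi(\F)$ is a $(k+1)$-uniform MLCIF on $[2k+2]$, i.e.\ $\Phi(\F) \in \M{k+1}$, injectivity of $\Phi$ is immediate, since $\F$ is recovered from $\Phi(\F)$ as $\{\, g^{-1}(G \setminus \{1\}) : G \in \Phi(\F),\ 1 \in G,\ 2 \notin G \,\}$. Hence $|\M{k}| = |\Phi(\M{k})| \le |\M{k+1}|$. (For $k = 1$ one has $|\M{1}| = 1$ and the claim is trivial; the construction above also works verbatim in that case.)
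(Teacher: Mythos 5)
Your proof is correct, but it takes a genuinely different route from the paper's. The paper also builds an injection from \(\M{k}\) into \(\M{k+1}\), but it adjoins a \emph{single} new element at the \emph{top}: from \(\F \in \M{k}\) it forms the left-compressed intersecting family \(\bigcup_{F \in \F}\LC(F \cup \{2k+2\})\) and then extends it, non-constructively, to some MLCIF \(\overline{\F} \in \M{k+1}\). Because the extension is not explicit, injectivity is not immediate there and is argued via complementation: if \(F \in \F \setminus \G\) then \(F^{\mathrm{c}} \in \G\), so \(F \cup \{2k+2\} \in \overline{\F}\) while \(\overline{\G}\) contains \(F^{\mathrm{c}} \cup \{2k+1\}\) and hence cannot contain \(F \cup \{2k+2\}\). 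Your construction instead adjoins two new elements at the bottom and writes down the image MLCIF in full, so you take on the extra burden of verifying left-compressedness and maximality directly --- both of which you discharge correctly, the maximality via the same one-set-per-complementary-pair observation the paper records before this lemma --- and in exchange injectivity becomes trivial. One small remark: your parenthetical warning that adjoining the new elements at the top breaks left-compressedness does not apply to the paper's actual argument, since the paper immediately passes to the \(\leq_{\mathrm{LC}}\)-down-closure of the top-adjoined sets before extending to a maximal family. Both arguments are valid; yours has the mild advantage of exhibiting an explicit embedding of \(\M{k}\) into \(\M{k+1}\) rather than merely an injection whose values depend on an unspecified maximal extension.
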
 

\begin{proof}
    Consider a family \(\F \in \M{k}\). 
    Let \(\F' \defeq \{ F \cup \{2k+2\} : F \in \F \}\), and observe that the family \(\widehat{\F} = \bigcup_{F \in \F'} \LC(F)\) is a left-compressed and intersecting \((k+1)\)-uniform family on \([2k+2]\). Indeed, for any two sets \(\{x_i\}_{i \in [k+1]}, \{y\}_{i \in [k+1]} \in \widehat{\F}\) we have \( \{x_i\}_{i \in [k+1]} \cap \{y\}_{i \in [k+1]} \supseteq \{x_i\}_{i \in [k]} \cap \{y\}_{i \in [k]} \neq \emptyset\) since \(\{x_i\}_{i \in [k]}, \{y\}_{i \in [k]} \in \F\). It follows that \(\widehat{\F}\) extends to an MLCIF, that is, that \( \widehat{\F} \subseteq \overline{\F}\) for some \(\overline{\F} \in \M{k+1}\). 

    Now consider any \(\F, \G \in \M{k}\) with \(\F \neq \G\). There must be a set \(F \in \F \setminus \G\), and so \(F^{\text{c}} \in \G\) by our observation above. We then have \(F \cup \{2k+2\} \in \overline{\F}\). However, we cannot have \(F \cup \{2k+2\} \in \overline{\G}\) as \(\overline{\G}\) contains the set \(F^{\text{c}} \cup \{2k+1\}\) (since this set is a left-compression of \(F^{\text{c}} \cup \{2k+2\}\)).
    So \(\overline{\F} \neq \overline{\G}\). We conclude that the MLCIFs \(\overline{\F} \in \M{k+1}\) for \(\F \in \M{k}\) are all distinct, and so \(|\M{k}| \leq |\M{k+1}|\).
\end{proof}

For each set \(F \in \binom{[n]}{k}\) we write \(\total{F} \defeq \sum_{x \in F} x\) for the sum of the elements of~\(F\). Observe that if \(\lc{F}{G}\) then \(\total{F} \leq \total{G}\).
For each even integer \(k\) let \(\Pk\) be the collection of all pairs of complementary elements of \(\binom{[2k]}{k}\) that each sum to \(\frac{k}{2}(2k+1)\). In other words \( \Pk \defeq \big\{ \{F, F^c\} : F \in \binom{[2k]}{k} \textnormal{ and }\total{F} = \total{F^c} = \frac{k}{2}(2k+1) \} \big\} \). 

Our interest in \(\Pk\) is due to the next lemma, which gives a lower bound on the size of \(\mathcal{M}_k\) in terms of the size of \(\Pk\).

\begin{lemma} \label{lem:lower_bound}
For each even \(k \in \mathbb{N}\) we have \(|\mathcal{M}_k| \geq 2^{|\Pk|}\).
\end{lemma}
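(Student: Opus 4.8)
The plan is to construct, for each way of selecting one set from every complementary pair in \(\Pk\), a distinct \(k\)-uniform MLCIF on \([2k]\); since there are \(2^{|\Pk|}\) such selections this gives \(|\M{k}| \geq 2^{|\Pk|}\). The engine behind the construction is the monotonicity of the sum function \(\total{\cdot}\) under left-compression — and, crucially, the fact that a \emph{proper} left-compression \emph{strictly} decreases this sum — together with the observation recorded before Lemma~\ref{lem:noofIFsGrow} that a maximal intersecting family in \(\binom{[2k]}{k}\) is exactly one containing precisely one set from every complementary pair.

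In detail, I would set \(s \defeq \frac{k}{2}(2k+1)\), which is an integer because \(k\) is even, and partition \(\binom{[2k]}{k}\) into \(\mathcal{A} \sqcup \mathcal{C} \sqcup \mathcal{B}\) according to whether \(\total{F} < s\), \(\total{F} = s\), or \(\total{F} > s\). Since \(\total{F} + \total{F^{\text{c}}} = 1 + 2 + \cdots + 2k = 2s\), complementation interchanges \(\mathcal{A}\) and \(\mathcal{B}\) and maps \(\mathcal{C}\) onto itself; in particular \(\mathcal{C}\) is precisely the union of the (disjoint) pairs comprising \(\Pk\), so \(|\mathcal{C}| = 2|\Pk|\) and there are exactly \(2^{|\Pk|}\) subsets \(\mathcal{C}'\) of \(\mathcal{C}\) meeting each pair of \(\Pk\) in exactly one set. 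For each such \(\mathcal{C}'\) I define \(\F_{\mathcal{C}'} \defeq \mathcal{A} \cup \mathcal{C}'\).

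It then remains to verify that each \(\F_{\mathcal{C}'}\) is an MLCIF and that distinct \(\mathcal{C}'\) give distinct families. By construction \(\F_{\mathcal{C}'}\) contains exactly one set from each complementary pair (the member in \(\mathcal{A}\) when the pair meets both \(\mathcal{A}\) and \(\mathcal{B}\), and the chosen member of \(\mathcal{C}'\) when the pair lies in \(\Pk\)), so it is intersecting; moreover any \(F \notin \F_{\mathcal{C}'}\) has \(F^{\text{c}} \in \F_{\mathcal{C}'}\), so \(\F_{\mathcal{C}'}\) admits no intersecting proper superset. For left-compressedness, if \(F \in \F_{\mathcal{C}'}\) and \(\lc{F'}{F}\) with \(F' \neq F\), then \(\total{F'} < \total{F} \leq s\), so \(F' \in \mathcal{A} \subseteq \F_{\mathcal{C}'}\); hence \(\F_{\mathcal{C}'}\) is left-compressed, and being also a maximal intersecting family it is an MLCIF. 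Finally \(\F_{\mathcal{C}'} \cap \mathcal{C} = \mathcal{C}'\) (as \(\mathcal{A}\) is disjoint from \(\mathcal{C}\)), so \(\mathcal{C}' \mapsto \F_{\mathcal{C}'}\) is injective, producing \(2^{|\Pk|}\) distinct elements of \(\M{k}\).

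I do not anticipate a genuine difficulty here; the one point that requires care is the left-compressedness check, and within it the assertion that a proper left-compression of \(F\) strictly lowers \(\total{F}\). This is precisely what prevents a left-compression from moving ``sideways'' between two distinct sets of \(\mathcal{C}\), and hence what makes the choice of transversal \(\mathcal{C}'\) completely unconstrained: every set obtained from \(\mathcal{A}\) by adjoining a transversal of \(\Pk\) is automatically left-compressed.
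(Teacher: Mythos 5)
Your proof is correct and rests on exactly the same engine as the paper's: a proper left-compression strictly decreases \(\total{F}\), so no set of sum \(\frac{k}{2}(2k+1)\) can be reached ``sideways'' from another, and hence each of the \(2^{|\Pk|}\) transversals of \(\Pk\) yields a distinct left-compressed intersecting family. The only difference is presentational --- the paper takes the \(\lcposet\)-downward closure of the transversal and then extends it to some MLCIF (checking afterwards that distinctness survives the extension), whereas you exhibit the MLCIF explicitly as \(\mathcal{A} \cup \mathcal{C}'\), which removes the need for the extension step and makes injectivity immediate.
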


\begin{proof}
Let \(\G \subseteq \binom{[2k]}{k}\) be a family of sets which consists of precisely one set from each pair in \(\Pk\). Observe that the family \(\G^\leq = \bigcup_{G \in \G} \LC(G)\) is both left-compressed and intersecting. 
    To see the latter, observe that if 
    \(\lc{F}{G}\) and \(F \neq G\) then \(\total{F} < \total{G}\). So for each \(F \in \G^\leq \setminus \G\) we have \(\total{F} < \frac{k}{2}(2k-1)\) and \(\total{F^c} = k(2k+1) - \total{F} > \frac{k}{2}(2k-1)\), and thus \(F^c \notin \G^\leq\). Since \(\G\) contains precisely one set from each complementary pair in~\(\Pk\), we also have \(G^c \notin \G^\leq\) for each \(G \in \G\). So \(\G^\leq\) does not contain both \(F\) and \(F^c\) for any \(F \in \binom{[2k]}{k}\), and so \(\G^\leq\) is intersecting as claimed. It follows that there is some MLCIF \(\F(\G) \in \mathcal{M}_k\) with \(\G^\leq \subseteq \F(\G)\). 

There are \(2^{|\Pk|}\) different possible ways to form a family \(\G\) as described above, since we need to choose one set from each pair in \(\Pk\). Moreover, if \(\G \neq \G'\) then there is some pair \(\{F, F^c\} \in \Pk\) with \(F \in \G\) and \(F^c \in \G'\). We then have \(F \in \F(\G)\) and \(F^c \in \F(\G')\), so \(\F(\G) \neq \F(\G')\). So each choice of \(\G\) results in a different MLCIF \(\F(\G) \in \mathcal{M}_k\), and so \(|\mathcal{M}_k| \geq 2^{|\Pk|}\).
\end{proof}

We can now prove the part of Theorem~\ref{thm:main_double-exp} which applies for all \(k \geq 2\). We state this as the following theorem. 
\begin{theorem}\label{thm:main_allk}
    For each integer \(k \geq 2\) we have \(2^{\frac{1}{2}\binom{k-1}{\floor{k/2}}} \leq |\M{k}| \leq 2^{\frac{1}{2}\binom{2k}{k}}\). 
\end{theorem}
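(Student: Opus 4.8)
The upper bound $|\M{k}| \le 2^{\frac12\binom{2k}{k}}$ is immediate and was already noted in the introduction: $\binom{[2k]}{k}$ splits into $\frac12\binom{2k}{k}$ complementary pairs $\{F,F^{\mathrm c}\}$, and every member of $\M{k}$, being a maximal intersecting family, contains exactly one set from each pair, so $|\M{k}|$ is at most the number of such selections, namely $2^{\frac12\binom{2k}{k}}$. The plan for the lower bound is to establish it first for even $k$, directly from Lemmas~\ref{lem:lower_bound} and~\ref{lem:noofIFsGrow}, and then to deduce the odd case by monotonicity. This detour through even $k$ is forced rather than merely convenient: for odd $k$ the ground-set total $k(2k+1)$ is odd, so there is no ``balanced'' $k$-subset at all and $\mathcal{P}_k$ is not even defined.

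For even $m$, Lemma~\ref{lem:lower_bound} gives $|\M{m}| \ge 2^{|\mathcal{P}_m|}$, so it suffices to show $|\mathcal{P}_m| \ge \tfrac12\binom{m}{m/2}$. Every $F \in \binom{[2m]}{m}$ satisfies $\total{F} + \total{F^{\mathrm c}} = m(2m+1)$, so $\total{F} = \tfrac{m(2m+1)}{2}$ if and only if $\total{F^{\mathrm c}} = \tfrac{m(2m+1)}{2}$; since also $F \ne F^{\mathrm c}$, the collection of $m$-subsets of $[2m]$ of total $\tfrac{m(2m+1)}{2}$ partitions into complementary pairs, and these are precisely the pairs comprising $\mathcal{P}_m$. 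Hence $|\mathcal{P}_m|$ equals half the number of such subsets, and it is enough to exhibit $\binom{m}{m/2}$ of them. I would take, for each $S \in \binom{[m]}{m/2}$, the set $F_S \defeq \{2i : i \in S\} \cup \{2i-1 : i \in [m]\setminus S\}$: this is obtained from $\{1,3,\dots,2m-1\}$ (which has total $m^2$) by replacing $2i-1$ with $2i$ for each $i \in S$, raising the total by exactly $|S| = m/2$ to $m^2 + \tfrac m2 = \tfrac{m(2m+1)}{2}$. Since $F_S$ determines $S$, the $F_S$ are pairwise distinct, so $|\mathcal{P}_m| \ge \tfrac12\binom{m}{m/2}$ and therefore $|\M{m}| \ge 2^{\frac12\binom{m}{m/2}}$ for every even $m$.

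To finish, set $m \defeq 2\floor{k/2}$, the largest even integer at most $k$; then Lemma~\ref{lem:noofIFsGrow} gives $|\M{k}| \ge |\M{m}| \ge 2^{\frac12\binom{m}{m/2}}$, and it only remains to check $\binom{m}{m/2} \ge \binom{k-1}{\floor{k/2}}$. If $k$ is even then $m = k$ and $m/2 = \floor{k/2}$, and $\binom{k}{k/2} \ge \binom{k-1}{k/2}$ by Pascal's identity; if $k$ is odd then $m = k-1$ and $m/2 = (k-1)/2 = \floor{k/2}$, so in fact $\binom{m}{m/2} = \binom{k-1}{\floor{k/2}}$. Either way $|\M{k}| \ge 2^{\frac12\binom{k-1}{\floor{k/2}}}$, as required. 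There is no genuine obstacle once Lemmas~\ref{lem:lower_bound} and~\ref{lem:noofIFsGrow} are in hand; the only points needing care are the parity issue already mentioned and the bookkeeping with the floor, so that the exponent lands exactly on $\binom{k-1}{\floor{k/2}}$ and not on a slightly smaller binomial coefficient.
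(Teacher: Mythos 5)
Your proof is correct and follows essentially the same route as the paper: the upper bound via the complementary-pair partition of \(\binom{[2k]}{k}\), and the lower bound via Lemma~\ref{lem:lower_bound} for even \(k\) combined with the monotonicity of Lemma~\ref{lem:noofIFsGrow} for odd \(k\). The only (cosmetic) difference is your construction of the balanced sets \(F_S\) from the pairs \(\{2i-1,2i\}\), where the paper instead takes unions of \(k/2\) of the pairs \(\{j, 2k+1-j\}\); both yield \(|\Pk| \geq \frac{1}{2}\binom{k}{k/2}\).
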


\begin{proof} As noted in the introduction, \(\binom{[2k]}{k}\) may be partitioned into \(\frac{1}{2}\binom{2k}{k}\) complementary pairs, and any \(k\)-uniform MLCIF \(\F \subseteq \binom{[2k]}{k}\) must contain precisely one set from each pair. It follows that \(|\mathcal{M}_k| \leq 2^{\frac{1}{2}\binom{2k}{k}}\), and so it remains to demonstrate the lower bound on \(|\mathcal{M}_k|\).
By Lemma~\ref{lem:noofIFsGrow} it is sufficient to show that if \(k\) is even then \(|\mathcal{M}_k| \geq 2^{\frac{1}{2}\binom{k}{k/2}}\). To do this, partition the set \([2k]\) into \(k\) pairs which each sum to \(2k+1\), namely \(\{j, 2k+1-j\}\) for each \(j \in [k]\). Since \(k\) is even we can then form a set \(F \in \binom{[2k]}{k}\) with \(\total{F} = \frac{k}{2}(2k+1)\) by choosing any \(\frac{k}{2}\) of these pairs and taking \(F\) to be their union. We then have \(\{F, F^c\} \in \Pk\). Since there are \(\binom{k}{k/2}\) options for which pairs we choose to form \(F\), we obtain \(|\Pk| \geq \frac{1}{2}\binom{k}{k/2}\). Lemma~\ref{lem:lower_bound} then gives the desired bound. \end{proof}

We now turn to the second part of Theorem~\ref{thm:main_double-exp}, which gives asymptotic bounds on~\(|\M{k}|\). The upper bound given in Theorem~\ref{thm:main_allk} was obtained by counting all the possible ways to choose a set from each of \(\frac{1}{2}\binom{2k}{k}\) complementary pairs. We can obtain a stronger bound on \(|\M{k}|\) by exploiting the fact that the family \(\B(\F)\) of boundary elements of \(\F\) is unique for each \(\F \in \M{k}\). Let \(\poset\) denote the partially ordered set (poset) \((\binom{[2k]}{k}), \lcposet )\). Since the boundary sets of \(\F\) are the \lcmax-maximal sets in \(\F\), the family \(\B(\F)\) is an antichain in \(\poset\). Together with the aforementioned uniqueness property, it follows that if the largest antichain in \(\poset\) has size \(q\), then \(|\M{k}| \leq \sum_{i=1}^{q}\binom{\binom{2k}{k}}{i}\).

We bound the size of the largest antichain in \(\poset\) in Corollary~\ref{cor:boundary}, using the following structural properties. Given a finite poset \(P = (X,\leq)\) and \(x \in X\), define the \emph{rank of \(x\)}, denoted \(\rho(x)\), to be the largest \(j\) such that \(P\) has a chain of length \(j\) with \(x\) as the largest element (with respect to \(\leq\)). Define \emph{rank layer \(j\)} to be \(\rho_j \defeq |\{x \in X : \rho(x) = j\} \), and let \(p_j \defeq |\rho_j|\). Let \(t\) be the largest integer such that \(p_t \neq 0\) (so \(t\) is the length of the longest chain in \(P\)). We say that \(P\) is \emph{rank-symmetric} if \(p_i = p_{t-i}\) for all \(0 \leq i \leq t\), \emph{rank-unimodal} if \(p_0 \leq p_1 \leq \cdots \leq p_m \geq p_{m+1} \geq \cdots \geq p_t\) for some \(0 \leq m \leq t\), and \emph{Sperner} if no antichain in \(P\) is larger than \(\max\{p_i : 0 \leq i \leq t\}\).

We now introduce the finite Young lattice \(L(m,n)\), a poset which has been widely-studied. Our presentation of \(L(m,n)\) is somewhat non-standard, as we give partitions in increasing order and include zero elements to indicate empty parts. The purpose of these changes is to highlight more clearly the correspondence presented in Lemma~\ref{lem:poset-iso} between \(L(m,n)\) and \(\poset\). Given positive integers \(m,n,r\), we say that \((x_i)_{i \in [m]} \defeq (x_1, \dots, x_m) \) is a \emph{partition (of \(r\)) into \(m\) parts with largest part \(n\)} if \(0 \leq x_1 \leq \cdots \leq x_m \leq n\) and \(\sum_{i=1}^{m} x_i = r\). For example, \(01124\) is a partition of 8 into 5 parts with largest part 4 (for brevity, we often write the partition \((x_1, \dots,x_m)\) as \(x_1\cdots x_m\)). Say that \((x_i)_{i \in [m]} \preceq (y_i)_{i \in [m]}\) if \(x_i \leq y_i\) for all \(i \in [m]\). Let \(L(m,n)\) denote the set of all partitions with exactly \(m\) parts, each part having size at most \(n\). For example, 
\(L(2,3) = \{ 00,01,02,03,11,12,13,22,23,33 \}\). 
Then \(( L(m,n), \preceq)\) (abbreviated to just \(L(m,n)\)), is a poset with rank function \(\rho\left((x_i)_{i \in [m]}\right) = \sum_{i=1}^{m} x_i\). The following lemma gives an equivalency between\(\poset\) and \(L(k,k)\).

\begin{lemma}\label{lem:poset-iso}
    For all \(k \in \mathbb{N}\), \( \poset\) and \(L(k,k)\) are order isomorphic.
\end{lemma}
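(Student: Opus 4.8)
The plan is to exhibit an explicit order-preserving bijection between $\binom{[2k]}{k}^{\lcposet}$ and $L(k,k)$. Given a set $F = \{x_1 < x_2 < \dots < x_k\} \in \binom{[2k]}{k}$, the natural candidate is the map sending $F$ to the sequence $(x_i - i)_{i \in [k]}$. First I would check this lands in $L(k,k)$: since $1 \le x_1 < \dots < x_k \le 2k$ with the $x_i$ strictly increasing, we have $x_i \ge i$, so $x_i - i \ge 0$; we have $x_i - i \le x_{i+1} - 1 - i \le x_{i+1} - (i+1)$, so the sequence is weakly increasing; and $x_i \le 2k$ gives $x_i - i \le 2k - i \le 2k - 1$, but in fact the binding constraint comes from $x_k \le 2k$ only giving $x_k - k \le k$, and more care is needed for earlier coordinates — actually $x_i \le 2k - (k - i) = k + i$ because there are $k-i$ elements of $F$ above $x_i$, each at most $2k$ and distinct, so $x_i \le 2k - (k-i)$, hence $x_i - i \le k$. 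Thus $(x_i - i)_{i \in [k]}$ is a partition into $k$ parts each of size at most $k$, i.e.\ an element of $L(k,k)$.

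Next I would verify bijectivity. The inverse map sends a partition $(y_i)_{i \in [k]} \in L(k,k)$ to the set $\{y_i + i : i \in [k]\}$; one checks the values $y_i + i$ are strictly increasing (since $y_i \le y_{i+1}$ implies $y_i + i < y_{i+1} + i + 1$) and lie in $[2k]$ (since $1 \le y_i + i$ and $y_i + i \le k + i \le 2k$), so this is a well-defined $k$-subset of $[2k]$, and the two maps are mutually inverse by inspection. Then I would check that the map is order-preserving in both directions: for $F = \{x_i\}$ and $G = \{x_i'\}$ we have $\lc{F}{G}$ iff $x_i \le x_i'$ for all $i$ iff $x_i - i \le x_i' - i$ for all $i$ iff $(x_i - i)_i \preceq (x_i' - i)_i$. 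Since this equivalence runs in both directions the bijection is an order isomorphism, which is exactly the claim.

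The argument is essentially a routine unwinding of definitions, so I do not expect a serious obstacle; the only point requiring genuine (if minor) care is confirming that the largest-part bound is exactly $k$ rather than something larger — this uses the observation above that $x_i \le k+i$ for every $i$, not merely $x_i \le 2k$. It is also worth remarking (though not strictly needed for the isomorphism) that under this bijection the complementation $F \mapsto F^{\mathrm{c}}$ on $\binom{[2k]}{k}$ corresponds to the order-reversing involution on $L(k,k)$ sending $(y_i)_{i}$ to $(k - y_{k+1-i})_{i}$, which will be convenient when we invoke rank-symmetry of $L(k,k)$ later; I would include this as a short closing observation in the proof.
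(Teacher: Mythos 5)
Your proposal is correct and uses exactly the map $\phi(\{x_i\}_{i\in[k]}) = (x_i - i)_{i\in[k]}$ that the paper uses, with the same well-definedness check (via $x_i \leq k+i$), the same explicit inverse, and the same verification that the order relations coincide. No issues.
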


\begin{proof}
    It suffices to show that the map \(\phi : \poset \rightarrow L(k,k)\) given by \(\phi(\{x_i\}_{i \in [k]}) \defeq (x_i - i)_{i \in [k]}\) is an order isomorphism. Consider \(X,Y\in \binom{[2k]}{k}\) with \(X = \{x_i\}_{i \in [k]} \) and \(Y = \{y_i\}_{i \in [k]} \). To see that \(\phi\) is well-defined, recall that our notation convention requires that \(0 < x_1 < x_2 < \dots < x_k \leq 2k\), so for each \(i \in [k]\) we have \(i \leq x_i \leq 2k-(k-i)\) and consequently \(0 \leq x_i - i \leq k\). Likewise, for each \(i \in [k-1]\) we have \(x_{i+1} \geq x_i +1\), so \(x_{i+1}-(i+1) \geq x_i - i\), and we conclude that \(\phi(X) \in L(k,k)\).
    Essentially the same argument shows that \(\phi\) has a well-defined inverse \(\phi^{-1}\left((x_i)_{i \in [k}\right) \defeq \{x_i + i\}_{i \in [k]} \), and so \(\phi\) is bijective. 
    Finally, to see that \(\phi\) is an order isomorphism, observe that \(\lc{X}{Y}\) if and only if for every \(i \in [k]\) we have \(x_i \leq y_i\), whilst \(\phi \left(\{x_i\}_{i \in [k]}\right) \preceq \phi \left(\{y_i\}_{i \in [k]}\right) \) if and only if for every \(i \in [k]\) we have \(x_i - i \leq y_i - i\), which is an identical condition.
\end{proof}

Our interest in \(L(k,k)\) is due to the following theorem in the form presented by \citet{stanley-ac}. Along with Lemma~\ref{lem:poset-iso}, gives the desired structural properties of \(\poset\).

\begin{theorem}[{\cite[Corollary~6.10]{stanley-ac}}]\label{thm:poset-props}
    For all \(m,n \in \mathbb{N}\), the poset \(L(m,n)\) is rank-symmetric, rank-unimodal and Sperner.
\end{theorem}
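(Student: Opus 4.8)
The plan is to establish the three properties in turn, obtaining rank-symmetry by an elementary bijection and then deriving rank-unimodality and the Sperner property together from a single linear-algebraic input: a Lefschetz-type property for an order-raising operator on \(L(m,n)\). For rank-symmetry, identify each \((x_i)_{i\in[m]}\in L(m,n)\) with the Young diagram it cuts out of an \(m\times n\) rectangle; the complementation map sending \((x_i)_{i\in[m]}\) to \((n-x_{m+1-i})_{i\in[m]}\) is then a bijection of \(L(m,n)\) which reverses \(\preceq\) and carries rank \(r\) to rank \(mn-r\). Since the longest chain has length \(t=mn\), this yields \(p_r=p_{t-r}\) for all \(r\), which is rank-symmetry.

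For the remaining two properties, let \(V=\bigoplus_{j=0}^{t}V_j\) be the rational vector space with basis \(L(m,n)\), graded so that \(V_j\) is spanned by the rank-\(j\) elements, and define the order-raising operator \(U\colon V\to V\) by \(U(x)=\sum_{y\,\gtrdot\,x} y\), the sum over all \(y\) covering \(x\), so that \(U(V_j)\subseteq V_{j+1}\). The key claim — and the step I expect to be the main obstacle — is the \emph{Lefschetz property}: for every \(j\leq t/2\) the linear map \(U^{\,t-2j}\colon V_j\to V_{t-j}\) is an isomorphism. There are two standard routes to this. One is to construct a companion lowering operator \(D\) (a suitably weighted down-map) so that \(U\), \(D\), and the grading operator satisfy the defining relations of \(\mathfrak{sl}_2(\mathbb{Q})\); then \(V\) becomes a finite-dimensional \(\mathfrak{sl}_2\)-module, and the Lefschetz property is exactly the statement that in each irreducible summand a power of the raising operator matches the two extreme weight spaces. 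The other is to realise \(L(m,n)\) as the poset of Schubert classes in \(H^{*}(\mathrm{Gr}(m,m+n);\mathbb{Q})\), with \(U\) acting as cup-product by the hyperplane class, so that the Lefschetz property becomes an instance of the classical hard Lefschetz theorem. Verifying the \(\mathfrak{sl}_2\)-relations (equivalently, setting up the geometric model) is where the real work lies; everything afterwards is formal.

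Granting the Lefschetz property, rank-unimodality is immediate: injectivity of \(U^{\,t-2j}\colon V_j\to V_{t-j}\) forces \(p_j=\dim V_j\leq\dim V_{t-j}=p_{t-j}\) for all \(j\leq t/2\), which together with rank-symmetry gives \(p_0\leq p_1\leq\cdots\leq p_{\lfloor t/2\rfloor}\) and \(p_{\lceil t/2\rceil}\geq\cdots\geq p_t\). (Unimodality alone also admits purely combinatorial proofs, but the Lefschetz route delivers the Sperner property for free.) For the latter, note that the Lefschetz property implies the single step \(U\colon V_j\to V_{j+1}\) is injective when \(j<t/2\) and surjective when \(j\geq t/2\). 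Since \(U\) has a \(0/1\) matrix whose support is precisely the covering relations between ranks \(j\) and \(j+1\), a Hall-type argument turns injectivity into a matching of the rank-\(j\) elements into the rank-\((j+1)\) elements and turns surjectivity (injectivity of the transpose) into a matching of the rank-\((j+1)\) elements into the rank-\(j\) elements. Splicing these matchings together, outward from a rank of maximum size, partitions \(L(m,n)\) into chains, each passing through a rank of maximum size; the number of chains is therefore \(\max_i p_i\), and since any antichain meets each chain at most once, no antichain exceeds \(\max_i p_i\). This is the Sperner property, completing the proof.
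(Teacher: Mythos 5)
This statement is quoted from Stanley's \emph{Algebraic Combinatorics} (Corollary~6.10) and the paper offers no proof of it, so there is no internal argument to compare against; the surrounding discussion only notes that rank-symmetry is elementary (via complementary pairs, exactly your first paragraph), that unimodality goes back to Sylvester with a combinatorial proof by O'Hara, and that the Sperner property is due to Stanley. Your architecture --- complementation for rank-symmetry, then a Lefschetz property for an order-raising operator yielding unimodality and Spernicity together via the chain-partition argument --- is the correct and standard one; it is essentially Stanley's original 1980 hard-Lefschetz proof, or equivalently Proctor's \(\mathfrak{sl}_2\) version. (For what it is worth, the cited textbook source takes yet a third route: it realises \(L(m,n)\) as the quotient of the Boolean lattice \(B_{mn}\) by a wreath-product action permuting the cells of an \(m\times n\) grid within and among rows, and transfers the Lefschetz property of \(B_{mn}\), which \emph{does} admit the naive \(\mathfrak{sl}_2\) action, to the quotient.) Your deduction of the Sperner property from the Lefschetz property --- injectivity/surjectivity of each single step, the determinant-expansion argument producing matchings supported on cover relations, and the splicing into chains through a maximum rank --- is sound.

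The genuine gap is the one you flag yourself: the Lefschetz property is asserted, not proved, and it is the entire mathematical content of the theorem. In particular, ``construct a companion lowering operator \(D\) so that the \(\mathfrak{sl}_2\) relations hold'' is not a routine verification for \(L(m,n)\): unlike the Boolean lattice, the unweighted up and down operators on \(L(m,n)\) do not satisfy \(DU-UD=(t-2j)I\) on rank \(j\), and producing the correct weights (or invoking hard Lefschetz for \(H^*(\mathrm{Gr}(m,m+n))\)) is precisely Proctor's, respectively Stanley's, theorem. The paper itself underlines how non-elementary \(L(m,n)\) is by noting that even the existence of a symmetric chain decomposition is open. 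One smaller logical slip: your unimodality paragraph deduces only \(p_j\le p_{t-j}\) from injectivity of \(U^{t-2j}\colon V_j\to V_{t-j}\), which is vacuous given rank-symmetry; what you need, and what you in fact state only in the Sperner paragraph, is that injectivity of the composite forces injectivity of each single factor \(U\colon V_j\to V_{j+1}\) for \(j<t/2\), whence \(p_j\le p_{j+1}\). With that rearrangement, and with one of the two routes to the Lefschetz property actually carried out, the argument would be complete.
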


Indeed, the rank-symmetry of \(L(m,n)\) is easy to see by arranging the partitions in complementary pairs by defining \((x_i)_{i \in [m]}^c = (n-x_{m-i+1})_{i \in [m]} \). For example, in \(L(2,3)\) the partition 00 would pair with 33, 01 with 23, 11 with 22 and so on. The rank-unimodality of \(L(m,n)\) was first given by \citet{sylvester}. A purely combinatorial proof was given by \citet{o'hara} more than 100 years later, by presenting an injection between rank layers \(i\) and \(i + 1\) in \(L(m,n)\) (for \(i < \frac{1}{2}mn\)). The Spernicity of \(L(m,n)\) is due to \citet{stanley-sperner_property}. It would be desirable to find an injection \(f : \rho_i \to \rho_{i+1}\) (i.e. between successive rank layers) for each \(i < \frac{1}{2}{mn}\)  with the additional property that \(x \preceq f(x)\) for all \(x \in \rho_i\). This would give a symmetric chain decomposition of \(L(m,n)\), from which rank-unimodality and Spernicity would then follow immediately (see \citet{stanley-ac} for further details and the definition of symmetric chain decomposition). However, the existence of such an injection remains a significant open problem.

\begin{corollary}\label{cor:boundary}
    For even \(k\), every antichain in \(\poset\) has size at most \(2|\Pk|\).
\end{corollary}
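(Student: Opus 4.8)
The plan is to leverage all the structural results assembled above. First I would invoke Lemma~\ref{lem:poset-iso} to replace \(\poset\) by the order-isomorphic poset \(L(k,k)\), so that it suffices to bound the size of a largest antichain in \(L(k,k)\). By Theorem~\ref{thm:poset-props} this poset is Sperner, so every antichain in it has size at most \(\max_j p_j\), the size of a largest rank layer. Since \(L(k,k)\) is moreover rank-symmetric and rank-unimodal, with longest chain of length \(t = k^2\) (running from the unique minimal element \(00\cdots 0\) of rank \(0\) to the unique maximal element \(kk\cdots k\) of rank \(k^2\)), this maximum is attained at the middle layer \(j = k^2/2\). This is the one place where the hypothesis that \(k\) is even is used, so that \(k^2/2\) is an integer and there is a genuine unique middle layer; it then follows that every antichain in \(\poset\) has size at most \(p_{k^2/2}\).

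It then remains to show that \(p_{k^2/2} = 2|\Pk|\). For this I would track the rank function through the isomorphism \(\phi\) of Lemma~\ref{lem:poset-iso}: since \(\phi(\{x_i\}_{i\in[k]}) = (x_i - i)_{i\in[k]}\) has rank \(\sum_{i\in[k]}(x_i - i) = \total{F} - \binom{k+1}{2}\), the rank-\(k^2/2\) layer of \(L(k,k)\) corresponds under \(\phi^{-1}\) exactly to the set of \(F \in \binom{[2k]}{k}\) with \(\total{F} = k^2/2 + \binom{k+1}{2} = \tfrac{k}{2}(2k+1)\). Every such \(F\) has \(\total{F^c} = \total{[2k]} - \total{F} = k(2k+1) - \tfrac{k}{2}(2k+1) = \tfrac{k}{2}(2k+1)\) as well, so \(\{F, F^c\} \in \Pk\); conversely each pair in \(\Pk\) accounts for exactly two members of this set. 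Hence \(p_{k^2/2} = 2|\Pk|\), and combining with the previous paragraph gives the claimed bound (which is in fact tight, since the middle rank layer is itself an antichain of this size).

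I do not expect any genuine obstacle here: the substantive work has already been front-loaded into Lemma~\ref{lem:poset-iso} and the cited Theorem~\ref{thm:poset-props}. The only points requiring a little care are the elementary observation that rank-symmetry together with rank-unimodality forces the peak of the sequence \((p_j)\) to sit at \(\lfloor t/2 \rfloor\) (hence at \(k^2/2\) when \(t = k^2\) is even), and the bookkeeping identifying the middle rank layer of \(L(k,k)\) with the \(F\) summed over in the definition of \(\Pk\), via the constant shift \(\total{F} \mapsto \total{F} - \binom{k+1}{2}\) induced by \(\phi\).
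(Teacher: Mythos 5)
Your proposal is correct and follows essentially the same route as the paper: pass to \(L(k,k)\) via Lemma~\ref{lem:poset-iso}, use rank-symmetry, rank-unimodality and the Sperner property to bound antichains by the middle rank layer \(p_{k^2/2}\), and identify that layer (via the shift \(\total{F} - \binom{k+1}{2}\)) with the union of the pairs in \(\Pk\). The only cosmetic difference is that you establish the equality \(p_{k^2/2} = 2|\Pk|\) where the paper contents itself with the inequality \(|\rho_{k^2/2}| \leq 2|\Pk|\); both are fine.
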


\begin{proof}
Observe that \(L(k, k)\) has \(k^2+1\) rank layers \(\rho_0, \rho_1, \dots, \rho_{k^2}\). The rank-symmetry and rank-unimodality of \(L(k, k)\) together imply that \(p_{k^2/2} \geq p_j\) for each \(0 \leq j \leq k^2\). The fact that \(L(k, k)\) is Sperner then implies that every antichain in \(L(k, k)\) has size at most \(p_{k^2/2}\), and since \(L(k, k)\) and \(\poset\) are order isomorphic the same is true of every antichain in \(\poset\).
    Now recall the bijection \(\phi\) from the proof of Lemma~\ref{lem:poset-iso}. For every \(X = (x_i)_{i \in [k]} \in \rho_{k^2/2}\) we have \( \total{\phi^{-1}\left(X\right)} = \sum_{i \in [k]}(x_i + i) = \rho\left((x_i)_{i \in [k]}\right) + \sum_{i \in [k]} i = \frac{k^2}{2} + \frac{k(k+1)}{2} = \frac{k}{2}(2k+1)\), so \(\phi^{-1}(X) \in \{F, F^c\}\) for some \(\{F, F^c \} \in \Pk\). It follows that that \(|\rho_{k^2/2}| \leq |\bigcup \Pk| = 2|\Pk|\), and so every every antichain in \(\poset\) has size at most~\(2|\Pk|\). \qedhere
\end{proof}

In 1982, \citet{prodinger-partitions} established the asymptotic growth of \(|\Pk|\).

\begin{theorem}[\cite{prodinger-partitions}] \label{thm:asympt}
For even \(k\) we have  \(|\Pk| \sim \frac{2^{2k}\sqrt{3}}{2\pi k^2}\).
\end{theorem}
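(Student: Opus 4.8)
Theorem~\ref{thm:asympt} is a known result of Prodinger; the approach I would take to prove it is as follows. The plan is to reduce the statement to the asymptotics of the central coefficient of a Gaussian binomial coefficient, and then to extract that coefficient by a local-limit argument. First I would note that \(\total{[2k]} = k(2k+1)\), so a set \(F \in \binom{[2k]}{k}\) has \(\total F = \total{F^c}\) exactly when \(\total F = \frac{k}{2}(2k+1)\); hence \(2|\Pk|\) equals the number of \(F \in \binom{[2k]}{k}\) with \(\total F = \frac{k}{2}(2k+1)\). The bijection \(\phi\) of Lemma~\ref{lem:poset-iso} lowers \(\total{\cdot}\) by \(\sum_{i \in [k]} i = \frac{k(k+1)}{2}\), so (as in the proof of Corollary~\ref{cor:boundary}) this count equals \(p_{k^2/2} = |\rho_{k^2/2}|\), the size of the middle rank layer of \(L(k,k)\). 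Equivalently \(2|\Pk| = [q^{k^2/2}]\binom{2k}{k}_q\), the central coefficient of the Gaussian binomial coefficient \(\binom{2k}{k}_q = \prod_{i=1}^{k} \frac{1-q^{k+i}}{1-q^i} = \sum_{j=0}^{k^2} p_j q^j\).

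Next I would pass to probability. Let \(S\) be the sum of the elements of a uniformly random \(F \in \binom{[2k]}{k}\), so that \(\Pr[S = s] = p_{s - k(k+1)/2}/\binom{2k}{k}\); by the rank-symmetry \(p_j = p_{k^2-j}\) the mean is \(\mathbb{E}S = \frac{k}{2}(2k+1)\), whence \(2|\Pk| = \binom{2k}{k}\cdot\Pr[S = \mathbb{E}S]\). Writing \(S = \sum_{j=1}^{2k} j\,\mathbf{1}[j \in F]\) and using the sampling-without-replacement identities \(\mathrm{Var}(\mathbf{1}[j\in F]) = \tfrac14\) and \(\mathrm{Cov}(\mathbf{1}[i\in F],\mathbf{1}[j\in F]) = -\tfrac{1}{4(2k-1)}\) for \(i \neq j\), a short computation gives \(\mathrm{Var}(S) = \frac{k^2(2k+1)}{12} \sim \frac{k^3}{6}\). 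Since \(S\) takes integer values and attains two consecutive integers, a local central limit theorem for \(S\) should give \(\Pr[S = \mathbb{E}S] \sim \frac{1}{\sqrt{2\pi\,\mathrm{Var}(S)}} \sim \frac{\sqrt3}{\sqrt\pi\,k^{3/2}}\). Combining this with Stirling's estimate \(\binom{2k}{k}\sim \frac{4^k}{\sqrt{\pi k}}\) gives \(2|\Pk| \sim \frac{4^k\sqrt3}{\pi k^2}\), that is, \(|\Pk| \sim \frac{2^{2k}\sqrt3}{2\pi k^2}\).

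The genuinely delicate point is the local limit theorem, because the indicators \(\mathbf{1}[j \in F]\) are not independent --- they are negatively correlated through the constraint \(\sum_j \mathbf{1}[j\in F] = k\) --- so the classical local CLT for sums of independent variables does not apply directly. Instead I would extract the coefficient from \(G(q) := \binom{2k}{k}_q\) by the circle/saddle-point method, writing \(p_{k^2/2} = \frac{1}{2\pi}\int_{-\pi}^{\pi} G(e^{i\theta})\,e^{-i k^2\theta/2}\,d\theta\): the symmetry \(p_j = p_{k^2-j}\) places the relevant saddle at \(\theta = 0\), where the removable factors \(\frac{1-q^{k+i}}{1-q^i}\) of the product combine to the value \(G(1) = \binom{2k}{k}\), and a second-order expansion of \(\log G(e^{i\theta})\) around \(\theta = 0\) reproduces the Gaussian factor with exactly the variance computed above. (Alternatively one may invoke known asymptotic-normality results for the major-index or inversion statistic, or reproduce Prodinger's original contour-integral estimate; all routes give the same leading term.) I expect the main work to be the routine but careful bound showing that the part of the integral with \(|\theta|\) bounded away from \(0\) is of strictly smaller order; the leading constant \(\frac{\sqrt3}{2\pi}\) is pinned down entirely by the elementary variance computation and Stirling's formula.
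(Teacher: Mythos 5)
The paper does not prove this statement --- it is quoted as a known result of Prodinger --- so there is no internal proof to compare against; I can only assess your plan on its own terms. Your reductions are all correct: $\total{[2k]} = k(2k+1)$ gives $2|\Pk| = \#\{F : \total F = \tfrac{k}{2}(2k+1)\}$, the shift by $\tfrac{k(k+1)}{2}$ under $\phi$ identifies this with $p_{k^2/2} = [q^{k^2/2}]\binom{2k}{k}_q$, and the probabilistic reformulation is sound. The variance computation checks out exactly: $\mathrm{Var}(S) = \tfrac{2k-k}{2k-1}\cdot k\cdot\tfrac{(2k)^2-1}{12} = \tfrac{k^2(2k+1)}{12}\sim \tfrac{k^3}{6}$, and combining $\Pr[S=\mathbb{E}S]\sim (2\pi\,\mathrm{Var}(S))^{-1/2}$ with $\binom{2k}{k}\sim 4^k/\sqrt{\pi k}$ does yield $|\Pk|\sim \tfrac{2^{2k}\sqrt3}{2\pi k^2}$. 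So the leading constant is pinned down correctly by elementary means.

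You have also correctly located the one genuinely non-trivial step, namely the local limit theorem: the indicators are dependent, so this cannot be waved through by citing the classical i.i.d.\ local CLT, and as written your argument has a gap exactly there. However, the repair you propose is the right one and is essentially how results of this type are actually proved: extract $p_{k^2/2}$ as $\frac{1}{2\pi}\int_{-\pi}^{\pi}\binom{2k}{k}_{e^{i\theta}}e^{-ik^2\theta/2}\,d\theta$, expand $\log\binom{2k}{k}_{e^{i\theta}}$ to second order at $\theta=0$ (reproducing the variance above), and bound the contribution of $|\theta|$ bounded away from $0$ using $\bigl|\tfrac{1-e^{i(k+j)\theta}}{1-e^{ij\theta}}\bigr|$ estimates on the product form of the Gaussian binomial; alternatively one can invoke known local limit theorems for the Mann--Whitney/inversion statistic, whose generating function is exactly $\binom{2k}{k}_q$. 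I would accept this as a correct proof outline with the saddle-point tail estimate left as the (standard but not trivial) remaining work; everything that determines the stated constant is already verified.
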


With this, we are ready to prove Theorem~\ref{thm:main_double-exp}.

\begin{proof}[Proof of Theorem~\ref{thm:main_double-exp}]
Having proved Theorem~\ref{thm:main_allk}, it remains only to establish the asymptotic bounds on the size of \(|\M{k}|\). Theorem~\ref{thm:asympt} tells us that \(|\Pk| \sim \frac{2^{2k}\sqrt{3}}{2\pi k^2}\) for even \(k\), and a standard fact on the binomial distribution is that \(\binom{2k}{k} \sim \frac{2^{2k}}{\sqrt{\pi k}}\). Combining these asymptotic estimates with the fact that \(\frac{4}{9} < \frac{\sqrt{3}}{2\sqrt{\pi}} < \frac{1}{2}\), we conclude that for sufficiently large even \(k\) we have 
\begin{equation}\label{eqpk}
    \frac{4}{9}k^{-3/2}\binom{2k}{k} < |\Pk| < \frac{1}{2} k^{-3/2}\binom{2k}{k}.
\end{equation}
By Lemma~\ref{lem:lower_bound} it follows that \(|\M{k}| \geq 2^{|\Pk|} > 2^{\frac{4}{9}k^{-3/2}\binom{2k}{k}}\) for sufficiently large even \(k\). Consequently, for sufficiently large odd \(k\) we have \(|\M{k}| \geq |\M{k-1}| > 2^{\frac{4}{9}(k-1)^{-3/2}\binom{2(k-1)}{k-1}} > 2^{\frac{1}{9}k^{-3/2}\binom{2k}{k}}\) by Lemma~\ref{lem:noofIFsGrow}, giving the desired lower bound.

We now turn to the upper bound. Let \(k\) be even and sufficiently large that~\eqref{eqpk} holds, write \(m\defeq \binom{2k}{k}\), and let \(\F \subseteq \binom{[2k]}{k}\) be an MLCIF. The family \(\B(\F)\) of boundary sets of~\(\F\) is an antichain in \(\poset\), so by Corollary~\ref{cor:boundary} we have \(|\B(\F)| \leq 2|\Pk|\). Since \(\B(\F)\) uniquely determines \(\F\), it follows that 
    \begin{align*}
    |\M{k}| & \leq \sum_{i=1}^{2|\Pk|}\binom{m}{i} \leq 2|\Pk|\binom{m}{2|\Pk|} < 
    2^{2k}\binom{m}{k^{-3/2}m}
    \leq 2^{2k} \left(\frac{me}{k^{-3/2}m} \right)^{k^{-3/2}m} \\ 
    & < 2^{2k}\left( e k^{3/2} \right)^{k^{-3/2}m} = 2^{2k+\log_2(ek^{3/2})k^{-3/2} m} < 2^{\frac{7}{4}\log_2(k)k^{-3/2}\binom{2k}{k}},
    \end{align*}
    where the first strict inequality follows from~\eqref{eqpk}.
    Consequently, by Lemma~\ref{lem:noofIFsGrow}, it follows that for sufficiently large odd~\(k\) we have 
    \[
    |\M{k}| \leq |\M{k+1}| \leq 2^{\frac{7}{4}\log_2(k+1)(k+1)^{-3/2}\binom{2(k+1)}{k+1}} < 2^{7\log_2(k)k^{-3/2}\binom{2k}{k}}. \qedhere
    \]
\end{proof}

\subsection*{Potential stronger bounds via the hypergraph container method}
Our proof of Theorem~\ref{thm:main_double-exp} bounded the number of possibilities for the antichain \(\B(\F)\) in a crude way: if \(q\) is the size of the largest antichain in \(L(k, k)\), then the number of antichains in \(L(k, k)\) is at most the number of subsets of \(L(k, k)\) with size at most \(q\). The celebrated hypergraph container method (developed by Balogh, Morris and Samotij~\cite{Balogh_Morris_Samotij} and \citet{Saxton_Thomason}) provides a more nuanced possible approach for bounding the number of antichains. Specifically, following this approach we would show that each antichain in \(L(k, k)\) is contained in one of a relatively small number of sets (the eponymous `containers'), where each container set is close to being an antichain in the sense that it contains few comparable pairs. If we could deduce from this that the size of each container set is at most a constant factor larger than \(q\), then we would obtain the desired improved bound on the number of antichains \(L(k, k)\). However, to make this deduction we require the following conjectured supersaturation result (recall for this that we showed in the proof of Theorem~\ref{thm:main_double-exp} that the size of the largest antichain in \(L(k, k)\) is around \(k^{-3/2}\binom{2k}{k}\)).

\begin{conjecture}\label{conj:supersaturation}
    There exist \(c_1, c_2 > 0\) such that the following holds for all sufficiently large \(k\). Let \(G_k\) be the graph on \(L(k,k)\) in which \(\{\lambda,\mu\} \in E(G_k)\) if \(\lambda \preceq \mu \) or \(\lambda \succeq \mu\). For every \(U \subseteq L(k,k)\) with \(|U| \geq c_1 k^{-3/2}\binom{2k}{k}\), we have \(e(G_k[U]) \geq c_2 k^2 |U|\).
\end{conjecture}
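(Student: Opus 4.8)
\textbf{Towards Conjecture~\ref{conj:supersaturation}.} Recall from the proof of Theorem~\ref{thm:main_double-exp} that the rank layers of \(L(k,k)\) sum to \(\binom{2k}{k}\), and that by the rank-symmetry and rank-unimodality of Theorem~\ref{thm:poset-props} together with Theorem~\ref{thm:asympt} the middle rank layer \(\rho_{k^2/2}\) is the largest antichain in \(L(k,k)\) and has size \(p^{\ast} \defeq p_{k^2/2} \sim \frac{\sqrt 3}{\sqrt\pi}\,k^{-3/2}\binom{2k}{k}\). Since we are free to take \(c_1\) as large as we wish, we may assume \(|U| \geq C p^\ast\) for a large absolute constant \(C\); in particular \(U\) cannot be contained in the union of two rank layers. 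Call \(U\) \emph{centred} if it is obtained by listing the rank layers of \(L(k,k)\) in order of increasing distance from the middle and filling each completely before moving on; note that a centred set of size \(Cp^\ast\) spans roughly \(C\) rank layers nearest the middle, each of size \(\Theta(p^\ast)\). The plan has two parts: (i)~a reduction showing that, among all \(U\) of a given size, a centred set minimises the number of comparable pairs; and (ii)~a direct estimate showing that a centred set of size \(|U|\) already has at least \(c_2 k^2 |U|\) comparable pairs.

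Part~(ii) should be routine. For \(\lambda = (x_i)_{i \in [k]} \in L(k,k)\) let \(a(\lambda)\) be the number of \emph{addable cells} of \(\lambda\), i.e.\ the number of \(i \in [k]\) with \(x_i < x_{i+1}\) (taking \(x_{k+1} \defeq k\)); then \(\lambda\) lies below exactly \(\binom{a(\lambda)}{2} + O(a(\lambda))\) partitions two ranks above it. The input we need is that almost every partition at a rank near \(k^2/2\) has \(a(\lambda) = \Omega(k)\) --- the partitions with few addable cells, such as the half-box rectangle \((\tfrac k2,\dots,\tfrac k2)\) (which has \(a(\lambda)=1\)), are a negligible fraction of their layer; this follows from the known limit shape of a uniformly random partition in a square, or can be shown by a second-moment computation. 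Taking \(C \geq 4\), a centred \(U\) contains three consecutive central layers \(\rho_{k^2/2-1}, \rho_{k^2/2}, \rho_{k^2/2+1}\), so for all but a negligible fraction of \(\lambda \in \rho_{k^2/2-1}\) all of the \(\Omega(k^2)\) partitions two ranks above \(\lambda\) lie in \(U\). Summing over these \(\lambda\) gives \(e(G_k[U]) \geq \Omega(k^2 p^\ast) = \Omega(k^2 |U|)\), as required.

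Part~(i) is the crux. For the Boolean lattice the analogue --- that centred sets minimise the number of containments --- is a classical theorem of Kleitman, but its proof uses the symmetric chain decomposition of the cube in an essential way, whereas for \(L(k,k)\) no symmetric chain decomposition is known (this is precisely the open problem mentioned above). One would instead try to build the reduction from a normalised-matching (spreading) property of \(L(k,k)\), compressing \(U\) towards the middle one rank layer at a time and verifying that no compression increases the number of comparable pairs; but it is not even clear that centred sets are the true minimisers in \(L(k,k)\), and the quantitative spreading properties such an argument needs may themselves be hard to establish.

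A route avoiding the reduction would start, assuming \(e(G_k[U]) < c_2 k^2 |U|\), by passing to a subset \(U' \subseteq U\) with \(|U'| \geq |U|/2\) in which every vertex has \(G_k\)-degree less than \(4c_2 k^2\), and then show that such a \(U'\) must be concentrated on a few rank layers near the middle, contradicting \(|U'| \gg p^\ast\). The obstacle is that one must rule out \emph{thinly spread} configurations, in which \(U'\) meets many rank layers but only a small fraction of each: the comparabilities between two such layers can vanish entirely (the rank-distance-\(d\) comparability bipartite graph between two central layers has average degree of order \(k^d/d!\) but can still contain balanced independent sets of size \(\Omega(p^\ast)\), since its shadows can be concentrated on a few very-high-degree partitions), so one cannot argue layer by layer and must instead aggregate comparabilities across all rank-distances at once. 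Controlling this long-range comparability structure of \(L(k,k)\) --- equivalently, proving a spreading property that persists over many ranks --- is where we expect the real difficulty to lie, and is presumably the reason the statement remains only a conjecture.
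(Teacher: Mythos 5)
This statement is a conjecture, not a theorem: the paper explicitly records that the authors have been unable to prove it, precisely because the structural information needed about \(L(k,k)\) (a symmetric chain decomposition, or a sufficiently strong normalised-matching/spreading property) is not available. So there is no proof in the paper to compare against, and your proposal does not close the gap either --- you say so yourself. Your Part~(i), the reduction to centred sets, is exactly the missing ingredient: the Boolean-lattice analogue (Kleitman's theorem) leans on the symmetric chain decomposition of the cube, and no substitute is known for \(L(k,k)\); moreover, as you note, it is not even established that centred sets are the minimisers there, so the reduction might be aimed at a false intermediate statement. Your alternative route has the same status: ruling out thinly spread configurations is a long-range supersaturation statement that is essentially equivalent in difficulty to the conjecture itself. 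What you have written is a sensible research plan with an honestly flagged hole, not a proof.

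One further caution on the part you label routine. Part~(ii) rests on the claim that almost every partition in a central rank layer of \(L(k,k)\) has \(\Omega(k)\) addable cells. This is plausible (and consistent with limit-shape heuristics for random partitions in a box), but you assert it rather than prove it, and the limit-shape results you invoke are typically stated for uniformly random partitions of a fixed area in a box, which is the right model here but still requires a citation or a second-moment argument to turn ``negligible fraction'' into a usable quantitative bound. Even granting Part~(ii) in full, the conjecture remains open until Part~(i) or the degree-based alternative is carried out; I would recommend presenting this material as a discussion of approaches (as the paper does) rather than as a proof attempt.
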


The analogous result for the Boolean lattice was obtained by \citet{Kleitman}, and Noel, Scott and Sudakov~\cite{Noel_Scott_Sudakov} provided a general method for obtaining supersaturation results in various other posets. However, known structural results for the Young lattice \(L(m,n)\) remain fairly limited. For example, it is still unknown if \(L(m,n)\) has a symmetric chain decomposition as was conjectured to be the case by \citet{stanley-sperner_property} in 1980. As a result, we have so far been unable to prove Conjecture~\ref{conj:supersaturation}.

If Conjecture~\ref{conj:supersaturation} is true, then we could follow the hypergraph container argument outlined above using the same approach as was applied in the Boolean lattice by Balogh, Mycroft and Treglown~\cite{Balogh_Mycroft_Treglown}. This would yield an upper bound of \(2^{c k^{-3/2}\binom{2k}{k}}\) on the number of antichains in \(L(k, k)\) (for some constant \(c\)); by using this stronger bound in the proof of Theorem~\ref{thm:main_double-exp} we would then conclude that \(|\M{k}| = 2^{\Theta \left(k^{-3/2}\binom{2k}{k}\right)}\).
%
%
\begin{singlespace}
	\bibliographystyle{plainnat}
	\bibliography{ref}
\end{singlespace}

\end{document}